\documentclass[a4paper,12pt,reqno]{amsart}
\usepackage{amsthm, mathrsfs,amssymb,amsmath,}
\usepackage{enumerate}
\usepackage{mathtools}
\makeatletter
\@namedef{subjclassname@2010}{%
	\textup{2010} Mathematics Subject Classification}
\makeatother

\usepackage{hyperref}
\allowdisplaybreaks
\numberwithin{equation}{section}

\usepackage{graphicx}
\usepackage{enumitem}

\newtheorem{theorem}{Theorem}[section]
\newtheorem{lemma}[theorem]{Lemma}
\usepackage{xcolor}

\theoremstyle{definition}

\theoremstyle{remark}
\newtheorem{remark}[theorem]{Remark}

\numberwithin{equation}{section}

\begin{document}
	
	\title[Some Topological Properties and bi-Lipschitz equivalence of GDA]{Some Topological Properties and bi-Lipschitz equivalence of Graph-Directed Attractors}

		\author{Subhash Chandra}
	
	\address{Faculty of Mathematics and Computer Science, Transilvania University of Braşov, Iuliu Maniu Street, nr. 50, 500091, Braşov, Romania}

	\email{sahusubhash77@gmail.com}

	\subjclass{Primary 28A80; Secondary 05C20}
	\keywords{Graph directed IFS, attractor,  Strong separation condition, Open set condition, bi-Lipschitz map}
	
	\begin{abstract}In this paper, we discuss some topological properties of the graph-directed iterated function system (GDIFS) of injective contractions. Further, we establish the existence of a Lipschitz bijection between two different bi-Lipschitz graph-directed attractors. In general, two different bi-Lipschitz graph-directed attractors defined on the same graph need not be bi-Lipschitz equivalent. However, under suitable conditions, we prove that these graph-directed attractors are bi-Lipschitz equivalent.
	\end{abstract}
	\maketitle

	
	
	\section{Introduction}
    Many classes of fractals display the property of self-similarity, which means that they are composed of infinitely scaled-down copies of the entire set. Hutchinson \cite{HT} established a rigorous framework for self-similar sets and introduced their mathematical construction through Iterated Function Systems (IFSs). This approach was later developed and widely popularized by Barnsley and his collaborators \cite{MF1, MF2,MF3}, leading to extensive studies on the theory and applications of IFSs. An IFS on a closed subset $X$ of an Euclidean space is essentially a finite family of contractions  
$(f_i: X \to X )_{i \in I}.$ Its attractor is the unique non-empty compact subset A of X such that   $A=\cup_{i \in I} f_i(A)$.

Let us present a generalization of the concept of attractor of an IFS that was introduced by Mauldin and Williams \cite{MW}\ , under the label of graph directed attractor, via the concept of graph-directed iterated function system (for short GDIFS).

Let $G(V,E)$ be a directed graph such that for each $i\in V$ there is $e\in E
$ starting from $i$ (here $V=\{1,...,n\}$ is the set of vertices and $E$ is
the finite set of directed edges) and $(S_{e})_{e\in E}$ a family of
contractions from $\mathbb{R}^{d}$ to $\mathbb{R}^{d}$. The pair $\mathcal{S}=(G(V,E),(S_{e})_{e\in E})$ is called a graph directed
iterated function system in $\mathbb{R}^{d}$ and for such a system there
exists a unique $(A_{1},...,A_{n})\in (P_{cp}(\mathbb{R}^{d}))^{n}$ -called
the attractor of $\mathcal{S}$- such that \begin{equation}\label{ATR}A_{i}=\overset{n}{\underset{k=1}{%
\cup }}\underset{e\in E_{i,k}}{\cup }S_{e}(A_{k})\end{equation} for each $i\in \{1,...,n\}
$, where $P_{cp}(\mathbb{R}^{d})$ designates the family of nonempty compact
subsets of $\mathbb{R}^{d}$ and $E_{i,k}$ stands for the set of edges from $i
$ to $k$. The attractors of GDIFSs are called graph directed attractors.

The set $\overset{n}{\underset{i=1}{\cup }}A_{i}:=
\mathbf{A}$ is called the graph direceted set of $\mathcal{S}$.

For $i,j\in V$ and $p\in \mathbb{N}$, by $E_{i,j}^{p}$ we denote the set of
all direct paths $\mathbf{e}=(e_{1},...,e_{p})$ from $i$ to $j$, i.e. $e_{1}$
is a direct edge starting from $i$, $e_{p}$ is a direct edge finishing in $j$
and $e_{k}$ is a direct edge from $i_{k-1}$ to $i_{k}$ for all $k\in
\{2,...,p\}$, with $i_{1}=i$ and $i_{p}=j$.

For $\mathbf{e}=(e_{1},...,e_{p})\in E_{i,j}^{p}$, by $S_{\mathbf{e}}$ we
mean $S_{e_{1}}\circ ...\circ S_{e_{p}}$.

For $i,j\in V$, by $E_{i,j}$ we denote $\underset{p\in \mathbb{N}}{\cup }%
E_{i,j}^{p}$.

Note that%
\begin{equation*}
A_{i}=\underset{k=1}{\overset{n}{\cup }}\underset{\mathbf{e}\in E_{i,k}^{p}}{%
\cup }S_{\mathbf{e}}(A_{k})\text{,}
\end{equation*}%
for all $i\in V$ and $p\in \mathbb{N}$.
For $i\in V$, we denote by $E_i^{*}$ the set of all infinite admissible paths
starting from the vertex $i$, that is,
\[
E_i^{*}:=
\left\{(e_1,e_2,\ldots):\ e_m\in E_{i_m,i_{m+1}}
\text{ for all }m\in\mathbb{N},\ \text{with } i_1=i\right\}.
\]

\begin{remark}\label{RMK1} \textit{If} $\mathcal{S}=(G(V,E),(S_{e})_{e\in E})$ 
\textit{is a GDIFS in} $\mathbb{R}^{d}$ \textit{such that all }$S_{e}$%
\textit{\ are one-to-one} \textit{and }$(A_{1},...,A_{n})\in (P_{cp}(\mathbb{%
R}^{d}))^{n}$ \textit{its attractor, then}%
\[
S_{e}({A_{i}}^\circ)=({S_{e}(A_{i})})^\circ\text{,}
\]%
\textit{for all }$e\in E$\textit{\ and all }$i\in \{1,...,n\}=V$\textit{.}

This is clear since the function $f_{i,e}:A_{i}\rightarrow S_{e}(A_{i})$,
given by $f_{i,e}(x)=S_{e}(x)$ for all $x\in A_{i}$, is a homomorphism (as $%
A_{i}\in P_{cp}(\mathbb{R}^{d})$ and $f_{i,e}$ is a continuous bijection)
for all $e\in E$ and all $i\in \{1,...,n\}=V$\textit{.}\end{remark}

A GDIFS $\mathcal{S}=(G(V,E),(S_{e})_{e\in E})$ is called strongly connected
if the direct graph $G(V,E)$ is strongly connected, i.e. for every vertices $%
i,j\in V$ there exists a direct path from $i$ to $j$.

\textit{Throughout this article, we consider strongly connected GDIFS}.

A GDIFS $\mathcal{S}=(G(V,E),(S_{e})_{e\in E})$ satisfies the strong
separation condition (for short SSC) if, for each $i\in V$, the sets $%
S_{e}(A_{k})$, where $k\in V$ and $e\in E_{i,k}$, are disjoint.

A GDIFS $\mathcal{S}=(G(V,E),(S_{e})_{e\in E})$ satisfies the open set
condition (for short OSC) if there exists $(O_{1},...,O_{n})$, where $%
O_{k}$ is a nonempty open bounded subset of $\mathbb{R}^{d}$ for all $k\in \{1,...,n\}$,
such that, for each $i\in \{1,...,n\}$, we have $\overset{n}{\underset{k=1}{%
\cup }}\underset{e\in E_{i,k}}{\cup }S_{e}(O_{k})\subseteq O_{i}$ and the
union is disjoint.

Note that a GDIFS that satisfies SSC also satisfies OSC.
For details concerning GDIFSs, see \cite{GAE, AF, MW}.

The study of topological structure of self-similar and self-affine sets, including connectedness, local connectedness, and disk-likeness, is an important topic in fractal geometry. Hata initially provided a criterion for the connectivity of self-similar sets in \cite{HATA}. As a result, a lot of research has been done on these topological characteristics (\cite{CB, QR, QR2, IK,JHB,SM}). In \cite{GTD, GT2}, Deng and his collaborators have presented some interesting results on self-affine tiles. The topological properties of the class of self-affine tiles in $\mathbb{R}^3$ were covered in \cite{GTD}, while the case of the higher dimension was covered in \cite{GT2}. We also refer to some recent developments in nearby areas \cite{SD, MK, MNV}.\\

In the study of fractal geometry, the Lipschitz equivalence of different attractors is a crucial topic, and the bi-Lipschitz equivalence of self-similar sets has received a lot of attention. A significant step in the algebraic study of Lipschitz equivalence was taken by Falconer and Marsh \cite{FM}, who showed that for two dust-like self-similar sets to be Lipschitz equivalent, their scaling ratios must satisfy certain multiplicative relations, thereby uncovering strong algebraic links within the underlying IFSs. This direction was advanced in \cite{Rao}, where the first author, together with Rao and Wang, proposed the matchable condition as a necessary criterion for Lipschitz equivalence, leading to a complete classification in two notable situations: when the contraction vectors are of full rank, or when the system contains precisely two maps. Subsequently, Rao and Zhang \cite{Rao2} connected the classification problem with a higher-dimensional Frobenius problem, which provided conditions for equivalence in cases where the scaling ratios are coplanar in a specific sense. More recently, Xi and Xiong \cite{Xi} established a verifiable criterion for systems with commensurable ratios.
In \cite{HJR} Ruan et al. investigated the Lipschitz equivalence of self-similar sets in $\mathbb{R}$ that exhibit touching structures with an arbitrary number of branches. 
Further related developments can be found in \cite{JJ, ML, RAO, LF, YX} and the references therein.
The majority of the aforementioned works study the Lipschitz equivalence for self-similar sets. 
It is well known that the attractors associated with GDIFS exhibit behaviors that differ from those of the classical IFS \cite{BF, FHZ}. In particular, the study of GDIFS associated with bi-Lipschitz mappings reveals several intricate phenomena, and the corresponding results may be regarded as natural generalizations of those concerning self-similar sets. This is one of the primary reasons to work in this research direction.

Now, in order to set the stage for this paper we provide some fundamental concepts and results.
\par
 Given $\delta >0$ and a non-empty subset $E$ of $\mathbb{R}^{n}$, an at most
countable family $(F_{i})_{i\in I}$ of subsets of $\mathbb{R}^{n}$ is called
a $\delta $-cover of $E$ if $E\subseteq \underset{i\in I}{\cup }F_{i}$ and $%
diam(F_{i})<\delta $ for all $i\in I$, where for a non-empty subset $F$ of $%
\mathbb{R}^{n}$, by $diam(F)$ we designate the diameter of $F$.

If we consider $s>0$, then $\inf \{\underset{i\in I}{\sum }%
(diam(F_{i}))^{s}\mid E\subseteq \underset{i\in I}{\cup }F_{i} \text{ and }  
diam(F_{i})<\delta  \text{ for all } i\in I\}$  is denoted by $H_{\delta }^{s}(E)$ and $\underset{%
\delta \rightarrow 0}{\lim }H_{\delta }^{s}(E)$ represents the $s$%
-dimensional Hausdorff measure of $E$ which is denoted by $H^{s}(E)$. In
addition $\sup \{s>0\mid H^{s}(E)=\infty \}$ (which is the same with $\inf
\{s>0\mid H^{s}(E)=0\}$) defines the Hausdorff dimension of $E$ which is
denoted by $\dim _{H}(E)$.

Moreover, $\underset{\delta \rightarrow 0}{\underline{\lim }}\frac{\ln
N_{\delta }(E)}{-\ln \delta }$ defines the lower box dimension of $E$, which
is denoted by $\underline{\dim }_{B}(E)$, and $\underset{\delta \rightarrow 0%
}{\overline{\lim }}\frac{\ln N_{\delta }(E)}{-\ln \delta }$ defines the
upper box dimension of $E$, which is denoted by $\overline{\dim }_{B}(E)$,
and if they coincide, we define the box dimension of $E$ as being their common
value, i.e. $\dim _{B}(E)=\underset{\delta \rightarrow 0}{\lim }\frac{\ln
N_{\delta }(E)}{-\ln \delta }$, where $N_{\delta}(E)$ be the smallest number of sets of diameter at most $\delta$  which cover $E.$ 

Details concerning the above concepts can be found in \cite{Fal}.\\
Given a metric space $(X,d)$ and a function $f:X \to X, ~ \sup \frac{d(f(x),f(y))}{d(x,y)}=\operatorname{Lip}^+(f)$ and $\inf \frac{d(f(x),f(y))}{d(x,y)}=\operatorname{Lip}^-(f)$ designate the upper Lipschitz constant and, respectively, the lower Lipschitz constant. If $0<\operatorname{Lip}^-(f)\le \operatorname{Lip}^+(f)<\infty$, then $f$ is called bi-Lipschitz.

A GDIFS $S=(G(V,E),(S_{e})_{e\in E})$ satisfies the bounded distorsion
property (BDP for short) if there exists $K>0$ such that $\operatorname{Lip}^{+}(S_{\mathbf{%
e}})\leq K\operatorname{Lip}^{-}(S_{\mathbf{e}})$ for all $i,j\in V$ and all $e\in E_{i,j}$.
\pagebreak

\begin{lemma} \label{FLemma} \hfill
\begin{enumerate}
    \item[a)](see \cite{Fal}, page 32) \textit{If} $A\subseteq B\subseteq \mathbb{R}^{d}$\textit{, then}%
\[
\dim _{H}(A)\leq \dim _{H}(B)\text{.}
\]

    \item[b)](see \cite{Fal}, Corollary 2.4)  If $f: A \to \mathbb{R}^d$ where $A \subseteq \mathbb{R}^d$, is bi-Lipschitz, then 
 	$$ \dim_{H}(A)=\dim_{H}(f(A)).$$
\end{enumerate}
\end{lemma}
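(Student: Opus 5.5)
Part a) follows directly from the definition of Hausdorff measure, and part b) is best obtained from the standard Hölder/Lipschitz estimate for Hausdorff measures, applied once to $f$ and once to $f^{-1}$.

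For a), I would fix $s>0$ and $\delta>0$. Any $\delta$-cover of $B$ is also a $\delta$-cover of $A$, because $A\subseteq B$. Taking infima over a smaller family for $B$ gives $H^s_\delta(A)\le H^s_\delta(B)$. Letting $\delta\to 0$ yields $H^s(A)\le H^s(B)$ for every $s$. It follows that
\[
\{s>0\mid H^s(A)=\infty\}\subseteq\{s>0\mid H^s(B)=\infty\}.
\]
Taking suprema gives $\dim_H(A)\le\dim_H(B)$.

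For b), the key step is a Lipschitz estimate. Suppose $g:F\to\mathbb{R}^d$ satisfies $|g(x)-g(y)|\le c|x-y|$. I would show that $H^s(g(F))\le c^sH^s(F)$ for all $s$. Given a $\delta$-cover $(F_i)$ of $F$, the sets $g(F\cap F_i)$ cover $g(F)$. Each has diameter at most $c\,\mathrm{diam}(F_i)<c\delta$, so
\[
H^s_{c\delta}(g(F))\le c^s\sum_i(\mathrm{diam}\,F_i)^s.
\]
Taking the infimum over such covers and then letting $\delta\to0$ gives the estimate; a degenerate case $c=0$ is trivial. Consequently, if $H^s(F)<\infty$ then $H^s(g(F))<\infty$. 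This gives $\dim_H g(F)\le \dim_H F$, most cleanly via the $\inf\{s: H^s=0\}$ characterization, since $H^s(F)=0$ implies $H^s(g(F))=0$.

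Now I apply this with $g=f$ and $c=\operatorname{Lip}^+(f)<\infty$ on $A$, which gives $\dim_H f(A)\le\dim_H A$. Since $\operatorname{Lip}^-(f)>0$, $f$ is injective. Its inverse $f^{-1}:f(A)\to A$ satisfies $|f^{-1}(u)-f^{-1}(v)|\le \operatorname{Lip}^-(f)^{-1}|u-v|$, so it is Lipschitz. Applying the estimate to $f^{-1}$ gives $\dim_H A\le\dim_H f(A)$, and equality follows.

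The only mildly delicate point is the passage between "$H^s=\infty$" and "$H^s=0$" in the definition of dimension. Because the paper states that the supremum and infimum characterizations coincide, I would use the infimum one, where the measure estimate transfers directly. Everything else is routine bookkeeping of covers.
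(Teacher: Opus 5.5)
Your proof is correct, and it is precisely the standard argument from Falconer that the paper cites in place of giving a proof: monotonicity of $H^s_\delta$ under inclusion for a), and, for b), the estimate $H^s(g(F))\le c^sH^s(F)$ for Lipschitz $g$, applied to $f$ and to $f^{-1}$. (The step you flag as delicate is not actually delicate: the contrapositive of $H^s(F)<\infty\Rightarrow H^s(g(F))<\infty$ gives $\{s: H^s(g(F))=\infty\}\subseteq\{s: H^s(F)=\infty\}$, which yields the inequality directly for the $\sup$ definition as well.)
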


\begin{lemma} \label{equaldiem}
    If $\mathcal{S}=(G(V,E),(S_{e})_{e\in E})$ is a GDIFS in $\mathbb{R}^{d}$ and $(A_{1},...,A_{n})\in (P_{cp}(
\mathbb{R}^{d}))^{n}$ its attractor, then
$$\dim _{H}(A_{j})=\dim _{H}(A_{i}),$$
for all $i,j\in V$.
\end{lemma}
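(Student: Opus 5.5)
The plan is to use strong connectedness of $G(V,E)$ together with the invariance relation \eqref{ATR}, iterated along paths, and then apply Lemma \ref{FLemma}. I will read the statement in the setting the paper works in, namely that each $S_e$ is bi-Lipschitz, which is what Lemma \ref{FLemma} b) needs. Then every finite composition $S_{\mathbf{e}}$ is also bi-Lipschitz, since upper and lower Lipschitz constants are respectively sub- and super-multiplicative under composition.

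Fix $i,j\in V$. By strong connectedness there is $p\in\mathbb{N}$ and a path $\mathbf{e}\in E_{i,j}^{p}$. The path form of \eqref{ATR},
\[
A_{i}=\bigcup_{k=1}^{n}\bigcup_{\mathbf{f}\in E_{i,k}^{p}}S_{\mathbf{f}}(A_{k}),
\]
gives $S_{\mathbf{e}}(A_{j})\subseteq A_{i}$. By Lemma \ref{FLemma} a) we get $\dim_H(S_{\mathbf{e}}(A_j))\le \dim_H(A_i)$. Since $S_{\mathbf{e}}$ restricted to $A_j$ is bi-Lipschitz, Lemma \ref{FLemma} b) gives $\dim_H(S_{\mathbf{e}}(A_j))=\dim_H(A_j)$. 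Hence $\dim_H(A_j)\le \dim_H(A_i)$. Exchanging the roles of $i$ and $j$, using a path from $j$ to $i$, yields the reverse inequality and therefore equality.

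The main obstacle is the hypothesis needed for Lemma \ref{FLemma} b). If the $S_e$ are mere contractions, the statement can fail. For example, take two vertices with a constant map on the only edge from $1$ to $2$, and similarities on loops and edges at vertex $2$. Then $A_1$ is a single point while $A_2$ can be a Cantor set of positive dimension. So the proof must explicitly invoke the standing assumption that the maps are bi-Lipschitz, i.e.\ have positive lower Lipschitz constant. I would state this at the start and check that it passes to compositions via $\operatorname{Lip}^-(S_{\mathbf{e}})\ge\prod_m \operatorname{Lip}^-(S_{e_m})>0$.
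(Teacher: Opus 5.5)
Your proof is correct and is essentially the paper's argument: a path $\mathbf{e}$ from $i$ to $j$ gives $S_{\mathbf{e}}(A_j)\subseteq A_i$, Lemma \ref{FLemma} a) and b) then yield $\dim_H(A_j)\le\dim_H(A_i)$, and a path back gives the reverse inequality. Your insistence on the bi-Lipschitz hypothesis is well taken: the paper's proof also silently assumes $S_{\mathbf{e}}$ is bi-Lipschitz, and your constant-map example correctly shows the lemma fails for general contractions.
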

\begin{proof}
     Let us consider $i,j\in V$ arbitrarily chosen, but fixed.

As $\mathcal{S}$ is strongly connected, there exist a direct path $\mathbf{e}
$ from $i$ to $j$ and a direct path $\mathbf{e}^{^{\prime }}$ from $j$ to $i$
and, in view of $(1.1)$, we infer that 
\begin{equation}
S_{\mathbf{e}}(A_{j})\subseteq A_{i}  \tag{1}
\end{equation}%
and%
\begin{equation}
S_{\mathbf{e}^{^{\prime }}}(A_{i})\subseteq A_{j}\text{.}  \tag{2}
\end{equation}

Then we have%
\[
\dim _{H}(A_{j})\overset{S_{\mathbf{e}}\text{ is bi-Lipschitz \& Lemma 1.1,
b)}}{=}\dim _{H}(S_{\mathbf{e}}(A_{j}))\overset{\text{(1) and Lemma 1.1, a)}}%
{\leq }
\]%
\[
\leq \dim _{H}(A_{i})\overset{S_{\mathbf{e}^{^{\prime }}}\text{ is
bi-Lipschitz \& Lemma 1.1, b)}}{=}\dim _{H}(S_{\mathbf{e}^{^{\prime
}}}(A_{i}))\overset{\text{(2) and Lemma 1.1, a) }}{\leq }\dim _{H}(A_{j})%
\text{,}
\]%
so we conclude that%
\[
\dim _{H}(A_{i})=\dim _{H}(A_{j})\text{. }
\]

\bigskip 

\end{proof}
\begin{lemma}[see \cite{GT}, Separation Theorem, page 34] \label{GTL}
    If $V_1 \subset \mathbb{R}^2$ is compact, $V_2 \subset \mathbb{R}^2$ is a closed set with $V_1 \cap V_2$ totally disconnected and $v_1, v_2$ are points of $V_1 \backslash (V_1 \cap V_2)$ and $V_2\backslash (V_1 \cap V_2)$, respectively, and $\varepsilon$ is any positive number, then there exists a Jordan curve $\gamma$ which separates $v_1$ and $v_2$ and having the property that $\gamma \cap (V_1 \cup V_2) \subseteq V_1 \cap V_2$, and every point of $\gamma$ is at a distance less than $\varepsilon$ from some point of $V_1.$
\end{lemma}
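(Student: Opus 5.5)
Lemma~\ref{GTL} is the classical separation theorem quoted from \cite{GT}, and the paper uses it as a black box. If I had to prove it, I would follow the usual plane-topology route: build a polygonal curve on a fine grid away from $F:=V_1\cap V_2$, and repair it near $F$ using the total disconnectedness of $F$.

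\textbf{Step 1 (isolating $F$).} First replace $V_2$ by $V_2\cap \overline{B}(0,R)$, where $R$ is so large that $\overline{B}(0,R)$ contains the $\varepsilon$-neighbourhood of $V_1$ and the point $v_2$. This changes nothing relevant, so we may assume $V_2$ is compact.

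Since $F$ is compact and totally disconnected, for every $\eta>0$ it splits into finitely many pairwise disjoint clopen pieces $F_1,\dots,F_m$ of diameter $<\eta$. Enclose each piece in a small open polygonal disc $D_k$, with $\overline{D_k}$ pairwise disjoint, diameters $<\min(\eta,\varepsilon)$, and $v_1,v_2\notin\bigcup_k\overline{D_k}$. Put $U=\bigcup_k D_k$.

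Then $K_1=V_1\setminus U$ and $K_2=V_2\setminus U$ are disjoint compact sets, so $\rho=\operatorname{dist}(K_1,K_2)>0$.

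\textbf{Step 2 (grid curve outside $U$).} Take a square grid of mesh $<\min(\rho,\varepsilon)/3$. Let $P$ be the union of the closed grid squares meeting $K_1$, together with a connected chain of squares containing $v_1$; this chain is harmless because $v_1\notin V_2$. Let $Q$ be the component of $P\cup\overline{U}$ containing $v_1$.

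The boundary of the unbounded complementary component of $Q$ is a closed polygonal curve. Each of its points is within $\varepsilon$ of $V_1$, it misses $K_1\cup K_2$, and $v_2$ lies outside $Q$. The only places where it can meet $V_1\cup V_2$ are inside the discs $\overline{D_k}$.

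\textbf{Step 3 (rerouting inside each $D_k$; the main obstacle).} Inside each $\overline{D_k}$ the arcs of the boundary must be replaced by arcs that meet $V_1\cup V_2$ only in $F_k$. This is the hard part. The curve has to genuinely pass through points of $F$, since $V_1$ and $V_2$ may touch there, so no neighbourhood of $F$ can simply be avoided.

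I would handle it by iterating Steps 1--2 inside each disc at scales $\eta_j\to0$. At each stage the pieces of $F$ are refined, so the modified arcs meet $V_1\cup V_2$ only in ever smaller neighbourhoods of $F$. One then passes to the limit of these nested modifications. Because the corrections at stage $j$ have size $<\eta_j$ and $\sum\eta_j<\infty$, the limit is a continuous closed curve, and its points off $F$ avoid $V_1\cup V_2$.

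The delicate part is keeping the limit a \emph{Jordan} curve, that is, injective. For this I would arrange each refinement to be an arc-replacement inside pairwise disjoint small discs. Then distinct parameter values stay separated at some finite stage, and a uniform limit of such homeomorphisms onto their images remains injective. The alternative, to quote Janiszewski's theorem that a compact totally disconnected set does not separate the plane and argue via components, I would keep as a fallback.

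\textbf{Step 4 (conclusion).} Separation of $v_1$ from $v_2$ persists through all modifications, since every change occurs inside discs $D_k$ that contain neither point. The $\varepsilon$-closeness to $V_1$ is kept because $\operatorname{diam}(D_k)<\varepsilon$ and each $D_k$ meets $F\subseteq V_1$.
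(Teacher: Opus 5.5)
The paper does not prove this lemma; it quotes it from the cited monograph. Your sketch therefore has to stand on its own, and it breaks down in Step 3 for a structural reason, not merely for missing detail.

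Your scheme keeps the Step 2 curve fixed outside the discs. It only replaces arcs inside the $\overline{D_k}$, keeping their endpoints. Take $V_1=[-1,1]\times\{0\}$ and $V_2=\{0\}\times[-1,1]$, so $F=\{0\}$, with $v_1=(1,0)$, $v_2=(0,1)$ and $\varepsilon<1/2$.

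\begin{enumerate}
\item Whatever disc $D\ni 0$ and mesh you choose, the squares around the left arm and around the right arm both meet $\overline{D}$. So $Q$ contains both arms and lies in $\{|y|<1/2\}$.
\item The boundary of the component of $\mathbb{R}^2\setminus Q$ containing $v_2$ then passes through points $\ell$ and $r$ with first coordinates $\le -1$ and $\ge 1$. These lie outside $\overline{D}$, so they are never modified.
\item A Jordan curve through $\ell$ and $r$ inside $\{|y|<1/2\}$ is the union of two arcs from $\ell$ to $r$ meeting only at their endpoints.
\item Each of these arcs meets the $y$-axis in a point of $V_2$. The only admissible such point is $0$, so both arcs would contain $0$, which is absurd.
\end{enumerate}

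Hence no sequence of replacements confined to the discs can produce a Jordan curve here. The best the construction can give is a closed curve passing through $0$ twice (a figure eight). The two preimages of $0$ lie in the same nested discs at every stage, so your principle ``separated at some finite stage, hence injective in the limit'' never applies. A valid $\gamma$ does exist, but it goes around the right arm only and passes through $0$ once.

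The missing idea is a mechanism that decides, at each piece $F_k$, which parts of $V_1$ attached to $F_k$ are to lie inside the curve. By putting all of $\overline{D_k}$ into $Q$ you enclose all of them at the coarse stage, and no local modification can undo that. You have two ways out:
\begin{enumerate}
\item Build the coarse curve so that it does not swallow whole discs.
\item Give up injectivity during the construction. At the end, the limiting closed curve $\Gamma$ is a Peano continuum with $\Gamma\cap(V_1\cup V_2)\subseteq F$ that separates $v_1$ from $v_2$. Extract from it a simple closed curve that still separates them, using the classical theorem that a locally connected plane continuum separating two points contains such a curve.
\end{enumerate}
Even then you must prove that admissible replacement paths exist at every stage, and this depends on choosing the discs suitably. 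For example, suppose $\overline{D_k}$ closes up a ring of squares around a hole, and $V_2$ exits that hole through $D_k$ without touching $F_k$. Then no admissible replacement exists.

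Three smaller points:
\begin{enumerate}
\item In Step 2 you need the complementary component of $Q$ containing $v_2$, not the unbounded one; take $V_1$ a circle and $V_2$ its centre.
\item The boundary of that component may have corner pinches, so it need not be a simple polygon.
\item The pairwise disjoint polygonal discs of Step 1 require the fact that compact totally disconnected sets do not separate the plane.
\end{enumerate}
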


A set $X \subset \mathbb{R}^2$ is said to have \textbf{a hole} if there exists a Jordan curve $J \subset X$ such that $J^0 \cap X=\emptyset.$

	\section{Main results}

\textbf{Proposition 2.1.} \textit{If} $\mathcal{S}=(G(V,E),(S_{e})_{e\in E})$
\textit{is a GDIFS in} $\mathbb{R}^{d}$ \textit{satisfying OSC such that }$%
S_{e}$\textit{\ is one-to-one for all }$e\in E$\textit{\ and }$%
(A_{1},...,A_{n})\in (P_{cp}(\mathbb{R}^{d}))^{n}$\textit{\ its attractor,
then}%
\begin{equation*}
S_{e}(A_{i}^{\circ })\cap S_{e^{^{\prime }}}(A_{j}^{\circ })=\emptyset \text{%
,}
\end{equation*}%
\textit{for all }$i,j,k\in \{1,...,n\}$\textit{, }$e\in E_{k,i}$\textit{\
and }$e^{^{\prime }}\in E_{k,j}$\textit{, with }$e\neq e^{^{\prime }}$%
\textit{.}

\textit{Proof}. As $\mathcal{S}$ satisfies OSC, there exists $%
(O_{1},...,O_{n})$, where $O_{k}$ is a nonempty subset of $\mathbb{R}^{d}$
for all $k\in \{1,...,n\}$, such that 
\begin{equation*}
\underset{k=1}{\overset{n}{\cup }}\underset{e\in E_{i,k}}{\cup }%
S_{e}(O_{k})\subseteq O_{i}\text{,}
\end{equation*}%
the union being disjoint, for each $i\in \{1,...,n\}$.

Adopting the notation $O_{i}\cap A_{i}^{\circ }{:=}U_{i}$%
, we have%
\begin{equation}
\overline{U_{k}}=A_{k}\text{,}  \tag{1}
\end{equation}%
for all $k\in \{1,...,n\}$.

Indeed, the inclusion $\overline{U_{k}}\subseteq A_{k}$ yield from the
definition of $U_{k}$ and the inclusion $A_{k}\subseteq \overline{U_{k}}$ is
stated in Lemma 1.3.6 from [5].

Let us suppose, by reductio ad absurdum, that there exist $i,j,k\in
\{1,...,n\}$, $e\in E_{k,i}$ and $e^{^{\prime }}\in E_{k,j}$, with $e\neq
e^{^{\prime }}$, such that 
\begin{equation}
S_{e}(A_{i}^{\circ })\cap S_{e^{^{\prime }}}(A_{j}^{\circ })\neq \emptyset 
\text{.}  \tag{2}
\end{equation}

Then, via $(2)$, there exist 
\begin{equation*}
x\in S_{e}(A_{i}^{\circ })\cap S_{e^{^{\prime }}}(A_{j}^{\circ })\overset{%
\text{Remark 1.1}}{=}(S_{e}(A_{i})\cap S_{e^{^{\prime }}}(A_{j}))^{\circ }
\end{equation*}%
and $\varepsilon >0$ such that 
\begin{equation*}
B(x,\varepsilon )\subseteq S_{e}(A_{i}^{\circ })\cap S_{e^{^{\prime
}}}(A_{j}^{\circ })\overset{\text{(1)}}{\subseteq }S_{e}(\overline{U_{i}}%
)\cap S_{e^{^{\prime }}}(\overline{U_{j}})\subseteq \overline{S_{e}(U_{i})}%
\cap \overline{S_{e^{^{\prime }}}(U_{j})}\text{,}
\end{equation*}%
so we have 
\begin{equation}
B[x,\varepsilon ]=\overline{B(x,\varepsilon )}\subseteq \overline{%
S_{e}(U_{i})}\cap \overline{S_{e^{^{\prime }}}(U_{j})}\text{.}  \tag{3}
\end{equation}

As, in view of $(3)$, the open sets $S_{e}(U_{i})\cap B(x,\varepsilon )$ and 
$S_{e^{^{\prime }}}(U_{j})\cap B(x,\varepsilon )$ have the property that 
\begin{equation*}
\overline{S_{e}(U_{i})\cap B(x,\varepsilon )}=B[x,\varepsilon ]
\end{equation*}%
and 
\begin{equation*}
\overline{S_{e^{^{\prime }}}(U_{j})\cap B(x,\varepsilon )}=B[x,\varepsilon ]%
\text{,}
\end{equation*}%
we infer that 
\begin{equation}
\overline{S_{e}(U_{i})\cap S_{e^{^{\prime }}}(U_{j})\cap B(x,\varepsilon )}%
=B[x,\varepsilon ]\text{.}  \tag{4}
\end{equation}

Hence, since 
\begin{equation*}
S_{e}(U_{i})\cap S_{e^{^{\prime }}}(U_{j})\subseteq S_{e}(O_{i})\cap
S_{e^{^{\prime }}}(O_{j})=\emptyset \text{,}
\end{equation*}%
taking into account $(4)$, we get the contradiction $\emptyset
=B[x,\varepsilon ]$ which completes the proof. $\square $
\begin{theorem}\label{Main}
    \textit{Let} $\mathcal{S}=(G(V,E),(S_{e})_{e\in E})$
\textit{is a GDIFS in} $\mathbb{R}^{2}$ \textit{satisfying OSC such that }$%
S_{e}$\textit{\ is one-to-one for all }$e\in E$\textit{\ and }$%
(A_{1},...,A_{n})\in (P_{cp}(\mathbb{R}^{2}))^{n}$\textit{\ its attractor.} If $A_i$ is connected for all $i\in V$, then the following statements hold:
    \begin{itemize}
        \item[(i)] If $A_{i}^0\ne \emptyset$ for some $i\in V$ then at least one $A_i$ has no hole.
        \item[(ii)] The boundary $\partial A_i$ of $A_i$ is connected.
    \end{itemize}
\end{theorem}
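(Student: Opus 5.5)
For (i), I will argue by contradiction: assume every $A_k$ has a hole. A hole in $A_k$ is a Jordan curve $J_k\subset A_k$ whose interior region $J_k^{0}$ misses $A_k$. The idea is to transport holes through the maps $S_e$ and produce a minimal one that yields a contradiction.

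Since $A_i^{0}\neq\emptyset$ for some $i$, strong connectivity together with Remark~\ref{RMK1} gives $A_k^{0}\supseteq S_{\mathbf e}(A_i^{0})\neq\emptyset$ for every $k$, taking $\mathbf e\in E_{k,i}$. So every $A_k$ has nonempty interior. By (1) in the proof of Proposition~2.1, $A_k=\overline{U_k}$ with $U_k=O_k\cap A_k^{0}$.

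Next I use areas. Let $\lambda$ be Lebesgue measure. A hole $J_k$ in $A_k$ bounds a bounded complementary component $D_k$ of $A_k$ with $\lambda(D_k)>0$. Choose $k$ and a hole of maximal area among the finitely many $A_k$. If holes in each $A_k$ have areas bounded by the area of the convex hull, a supremum $M_k$ exists, and I take the largest $M=\max_k M_k$, attained up to $\varepsilon$ at some vertex $k$. For an edge $e\in E_{j,k}$, the map $S_e$ is an injective continuous map of the plane onto its image. By invariance of domain it is a homeomorphism onto an open set, so it carries the Jordan curve $J_k$ to a Jordan curve $S_e(J_k)\subset A_j$ whose inside is $S_e(D_k)$.

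The key step, which I expect to be the main obstacle, is to show that $S_e(D_k)$ meets $A_j$ only in a way that still leaves a hole, or alternatively that some other piece fills it. If $A_j\cap S_e(D_k)=\emptyset$ we get a new hole, but a smaller one under contraction, so this gives no contradiction by itself. I would instead run the argument in reverse. Because $A_k$ is the union of the pieces $S_e(A_m)$ with $e\in E_{k,m}$, and these pieces have pairwise disjoint interiors by Proposition~2.1, a hole in $A_k$ whose boundary runs through several pieces should force, via Lemma~\ref{GTL} applied to the union of the pieces, a hole inside a single piece $S_e(A_m)$. Pulling back by $S_e^{-1}$ then gives a hole in $A_m$ of larger area, by a factor at least $\operatorname{Lip}^+(S_e)^{-2}>1$, contradicting maximality. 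Making this rigorous requires controlling how a hole decomposes among the pieces, and this is where I would lean on the separation theorem with $V_1=S_e(A_m)$ and $V_2$ the union of the other pieces.

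For (ii), I will again argue by contradiction. Suppose $\partial A_i=P\cup Q$ with $P,Q$ disjoint nonempty closed sets. Since $A_i$ is connected and compact in $\mathbb R^2$, I apply Lemma~\ref{GTL} with $V_1=P$, $V_2=Q$, $V_1\cap V_2=\emptyset$ (totally disconnected), and points $v_1\in P$, $v_2\in Q$. This produces a Jordan curve $\gamma$ disjoint from $\partial A_i$ that separates $v_1$ from $v_2$. Then $\gamma$ lies either in $A_i^{0}$ or in $\mathbb R^2\setminus A_i$.

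In the first case, the unbounded side of $\gamma$ contains points outside $A_i$ and the bounded side contains boundary points of $A_i$. Hence the inside of $\gamma$ contains a complementary component of $A_i$, that is, a hole. In the second case, connectedness of $A_i$ forces $A_i$ to lie on one side of $\gamma$, which contradicts $v_1,v_2\in A_i$ being separated. So a disconnected boundary produces a hole in $A_i$, and the self-similar transport argument from (i) then yields the contradiction. Thus (ii) reduces to the same core obstacle as (i): ruling out holes using OSC and Proposition~2.1.
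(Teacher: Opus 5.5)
\paragraph{Gap in (i).} Your proof of (i) breaks down at the step you flag yourself, and that step is not only unproved but false in the form your argument needs. To beat the maximum, you must show that a hole $D$ of (near-)maximal area in $A_k$ is, up to a bounded area factor, a hole of a \emph{single} piece $S_e(A_m)$. Lemma~\ref{GTL} gives you separating Jordan curves, not this.

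Holes of a union are typically created by several pieces together. Take the Sierpinski carpet: one vertex, eight similitudes of ratio $1/3$, OSC with the open unit square, connected. Its largest hole is the central square, of area $1/9$. It is bounded by the surrounding first-level pieces and is a hole of none of them. The largest hole inside a single piece has area $1/81$, and its pull-back has area exactly $1/9$, so maximality is never violated.

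Notice also that after deriving $A_k^{\circ}\neq\emptyset$ you never use it again. The carpet has empty interior and does have holes, so any argument that ignores the interior cannot close.

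\paragraph{The missing idea.} The paper replaces area by finiteness inside a ball $B[z,\epsilon]\subset A_i$. Choose $N$ so that every level-$N$ cylinder has diameter $<\epsilon/2$.
\begin{enumerate}
\item The cylinder $S_{\mathbf e_1}(A_{k_1})\ni z$ carries a transported hole lying in $B[z,\epsilon]\subset A_i$. So a hole point is covered by another level-$N$ cylinder $S_{\mathbf e_2}(A_{k_2})$.
\item The hole curve lies in $S_{\mathbf e_1}(A_{k_1}^{\circ})$, which no other cylinder meets, by interior-disjointness (Proposition~2.1 together with $A_k=\overline{U_k}$).
\item By connectedness, $S_{\mathbf e_2}(A_{k_2})$ then lies entirely inside the Jordan domain bounded by $S_{\mathbf e_1}(J_{k_1})$, and its own hole point is missed by both cylinders.
\item Iterating produces pairwise distinct level-$N$ cylinders without end, which is impossible since there are only finitely many.
\end{enumerate}
This requires the hole curve to lie in $A_k^{\circ}$, as the paper's proof assumes ($J_k\subset A_k^{\circ}$). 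Your set-up ($J_k\subset A_k$ with $J_k^{0}\cap A_k=\emptyset$) does not suffice, since neighbouring pieces may cross such a curve.

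\paragraph{Part (ii).} Your reduction via Lemma~\ref{GTL} is the same as the paper's and is correct. It produces exactly this interior kind of hole: a curve $\gamma\subset A_i^{\circ}$ enclosing a point of $\mathbb{R}^2\setminus A_i$. This is not a hole in the sense you fixed for (i): the inside of $\gamma$ meets $A_i$, and the complementary component you find need not be a Jordan domain.

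More importantly, finishing (ii) requires that this particular $A_i$ has no such hole. That is strictly stronger than (i) as stated (``at least one $A_k$ has no hole''). It is also more than a contradiction argument starting from ``every $A_k$ has a hole'' can deliver. The paper's chain likewise needs a hole at every vertex it visits, and the paper makes the same leap (``From part (i), $A_i^0$ has no hole''). So you should either prove the strong form, that no $A_k$ has an interior hole, or not present (ii) as a consequence of (i).
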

\begin{proof}
(i)
      Let $J_k \subset A_k^0$ be a simple closed curve for each $k\in V$. Let the ineterior $\mathcal{O}_k$ of $J_k$ is not contained in $A_k^0$, we have some $x_k \in \mathcal{O}_k \backslash A_{k}.$ Given $z \in A_i^0$, there exists an $\epsilon>0$ such that the ball $B[z,\epsilon]=\{y \in \mathbb{R}^2:\|y-z\| \le \epsilon\}$ is contained in $A_{i}$. 
    For $\mathbf{e}=(e_1,e_2,...,e_m) \in E_{i,j}^m$, we define  $S_{\mathbf{e}}:=S_{e_1} \circ S_{e_2} \circ \cdots \circ S_{e_m}.$
    One can see that $A_i=\bigcup_{k=1}^{n}\bigcup_{\mathbf{e}\in E_{i,k}^m}S_e(A_k)$ for each $m\in \mathbb{N}.$ Set $r:=\max \{r_{e}: e \in E \}$, where $r_e \in (0,1)$ is the contraction ratio of the map $S_e$ for all $e \in E$. Clearly $r\in (0,1)$. Then, there exists $N>0$ such that $r^N \text{diam} (\mathbf{A})< \frac{\epsilon}{2}.$ This implies $r^N \text{diam}(A_i) \le r^N \text{diam} (\mathbf{A}) < \frac{\epsilon}{2}$ for all $i\in V.$ Consequently, $\text{diam} (S_{\mathbf{e}}(A_k)) < \frac{\epsilon}{2}$ for all $\mathbf{e} \in E_{i,k}^N.$ Since $z \in A_{i}^{0}\subset A_i=\bigcup_{k=1}^{n}\bigcup _{\mathbf{e}\in E_{i,k}^N}S_{\mathbf{e}}(A_k)$, there exist $k_1\in \{1,2,\dots,n\}$ and $ \mathbf{e_1} \in E_{i,k_1}^N$ such that $z \in S_{\mathbf{e_1}}(A_{k_1}).$ We get $S_{\mathbf{e_1}}(J_{k_1}) \subset S_{\mathbf{e_1}}(A_{k_1})$ and $$S_{\mathbf{e_1}}(x_{k_1}) \in S_{\mathbf{e_1}} (\mathcal{O}_{k_1}) \backslash S_{\mathbf{e_1}}(A_{k_1})  \subseteq B[z, \epsilon].$$ Thus, we have $S_{\mathbf{e_1}}(x_{k_1}) \notin S_{\mathbf{e_1}}(A_{k_1})$ and $S_{\mathbf{e_1}}(x_{k_1})\in A_i$. \\ Again,  since $A_i=\bigcup\limits_{k=1}^{n} \bigcup\limits_{\mathbf{e}\in E_{i,k}^N}S_{\mathbf{e}}(A_k)$, there is an $k_2 \in V$ and $\mathbf{e_2} \in E_{i,k_2}^N $ such that $ \mathbf{e_2}\ne \mathbf{e_1}$ and $S_{\mathbf{e_1}}(x_{k_1}) \in S_{\mathbf{e_2}}(A_{k_2}).$
   Since $J_{k_1} \subset A_{k_1}^{\circ}$, we have 
$S_{\mathbf{e_1}}(J_{k_1}) \subset S_{\mathbf{e_1}}(A_{k_1}^{\circ})$.  
By Proposition~2.1 the interiors of different cylinder images are disjoint, that is,
\[
S_{\mathbf{e_1}}(A_{k_1}^{\circ}) \cap S_{\mathbf{e_2}}(A_{k_2}^{\circ}) = \emptyset
\quad \text{whenever } \mathbf{e_1} \neq \mathbf{e_2}.
\]
Hence,
\[
S_{\mathbf{e_1}}(J_{k_1}) \cap S_{\mathbf{e_2}}(A_{k_2}) = \emptyset.
\]

    
    Since $S_{\mathbf{e_1}}(x_{k_1})\in S_{\mathbf{e_1}}(\mathcal{O}_{k_1})$, $S_{\mathbf{e_1}}(x_{k_1})\in S_{\mathbf{e_2}}(A_{k_2})$, $S_{\mathbf{e}_2}(A_{k_2}) \cap S_{\mathbf{e_1}}(J_{k_1})=\emptyset$ and using the fact $A_{k_2}$ is connected, we get $S_{\mathbf{e_2}}(A_{k_2})\subseteq  S_{\mathbf{e_1}}(\mathcal{O}_{k_1}).$ Now, we get $$S_{\mathbf{e_2}}(x_{k_2}) \in S_{\mathbf{e_2}}(\mathcal{O}_{k_2}) \backslash (S_{\mathbf{e_1}}(A_{k_1})\cup S_{\mathbf{e_2}}(A_{k_2}) )  \subset B[z,\epsilon].$$ Let $ \#(\cup_{k=1}^n E_{i,k}^N)=l.$ Consequently, we can choose all distinct $\mathbf{e_1},\mathbf{e_2},\dots,\mathbf{e}_l \in \cup_{k=1}^n E_{i,k}^N$ and $k_{1},k_{2},\dots,k_{l}\in V$ such that 
    $$ S_{\mathbf{e_M}}(x_{k_M}) \in S_{\mathbf{e_{M}}}(\mathcal{O}_{k_M})\backslash (\cup_{1 \le i \le M} S_{\mathbf{e}_i}(A_{k_i})) \subset B[z.\epsilon]$$
    for any $1 \le M \le l.$ Thus, we get 
    $$S_{\mathbf{e}_l}(x_{k_l}) \in S_{\mathbf{e}_l} (\mathcal{O}_{k_l}) \backslash (\cup_{1 \le i \le l} S_{\mathbf{e}_i}(A_{k_i})) = S_{\mathbf{e}_l} (\mathcal{O}_{k_l}) \backslash A_i  \subset B[z,\epsilon].$$ This implies that $B[z,\epsilon]$ is not fully contained in $A_i$, which is a contradiction.\\\\
    (ii) Let $\partial A_i=V_1 \cup V_2$ be a separation of $\partial A_i,$ $V_1$ and $V_2$ are closed sets. By Lemma \ref{GTL}, for any $v_i  \in V_i (i=1,2)$  and any $0<\epsilon < \inf \{|x-y|: x\in V_1, y \in V_2\}$ we can choose a simple closed curve $\gamma_i$ which does not intersect $\partial A_i$ and which separates $v_1$ and $v_2.$ From the part (i), $A_i^0$ has no hole. We can claim that $\gamma_i \cap A_i^0=\emptyset$. Otherwise, we get some $x \in \gamma_i \cap A_i^0$.\\ Since $\gamma_i \cap \partial A_i=\emptyset, \gamma_i \subset A_i^0.$ We may consider that $v_1 \subseteq \gamma_i^0,$ then for $\delta>0$ the ball $V_1(v_1,\delta) \nsubseteq A_i^0$. This is contradictory to the fact that $A_i^0$ has no holes. So necessarily, we get $\gamma_i \cap A_i=\gamma_i \cap A_i^0=\emptyset.$ Let $A_{i,1}$ and $A_{i,2}$ consist of the points of $A_i$ which are contained in the interior and exterior of $\gamma_i$, respectively. Then $A_i=A_{i,1} \cup A_{i,2}$ is a separation, which contradicts the connectedness of $A_i.$ 
\end{proof}
   \begin{theorem}\label{TH2.2}
    \textit{Let} $\mathcal{S}=(G(V,E),(S_{e})_{e\in E})$
\textit{is a GDIFS in} $\mathbb{R}^{d}$ \textit{such that }$%
S_{e}$\textit{\ is one-to-one for all }$e\in E$\textit{\ and }$%
(A_{1},...,A_{n})\in (P_{cp}(\mathbb{R}^{d}))^{n}$\textit{\ its attractor.}
     Assume that the set $A_i \backslash \Theta (A_i) \neq \emptyset$ for all $i\in V,$ where $\Theta (A_i)$ is the collection of trivial connected components of $A_i$ for all $i\in V.$ Then $A_i \backslash \Theta (A_i)$ is dense in $A_i$ for all $i\in V.$ This implies that either $\overline{\dim}_B (A_i \backslash \Theta (A_i))=\overline{\dim}_B (A_i)$ and $\underline{\dim}_B (A_i \backslash \Theta (A_i))=\underline{\dim}_B (A_i)$ or $A_i \backslash \Theta (A_i) = \emptyset$ for all $i\in V.$
\end{theorem}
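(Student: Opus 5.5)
Write $\Theta(A_i)$ for the union of the singleton components of $A_i$, and put $D_i:=A_i\backslash\Theta(A_i)$. A point $x\in A_i$ lies in $D_i$ exactly when the connected component of $x$ in $A_i$ contains more than one point. The plan is first to show that the family $(D_1,\dots,D_n)$ is invariant under the maps $S_{\mathbf e}$ along paths. Then, by self-similarity, $D_i$ meets every small cylinder, and this gives density. The box-dimension statement will then follow from the fact that box dimensions do not see closures.

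\emph{Step 1 (invariance).} Take $\mathbf e\in E_{i,j}^m$ and $y\in D_j$, and let $C$ be the component of $y$ in $A_j$, so that $C$ contains at least two points. The map $S_{\mathbf e}$ is continuous and one-to-one. Hence $S_{\mathbf e}(C)$ is connected and still contains at least two points, and by (\ref{ATR}) iterated it lies in $A_i$. So $S_{\mathbf e}(C)$ sits inside a non-trivial component of $A_i$, which gives $S_{\mathbf e}(D_j)\subseteq D_i$. Injectivity is used exactly here, to ensure the image of a non-degenerate continuum is not a point.

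\emph{Step 2 (density).} Fix $x\in A_i$ and $\varepsilon>0$. As in the proof of Theorem \ref{Main}, set $r=\max_e \operatorname{Lip}^+(S_e)<1$ and choose $N$ with $r^N\operatorname{diam}(\mathbf A)<\varepsilon$. From $A_i=\bigcup_k\bigcup_{\mathbf e\in E^N_{i,k}}S_{\mathbf e}(A_k)$ we get $k$ and $\mathbf e\in E^N_{i,k}$ with $x\in S_{\mathbf e}(A_k)$, and this cylinder has diameter less than $\varepsilon$. By hypothesis $D_k\neq\emptyset$, so we may pick $y\in D_k$. Step 1 gives $S_{\mathbf e}(y)\in D_i$, and $\|S_{\mathbf e}(y)-x\|\le\operatorname{diam} S_{\mathbf e}(A_k)<\varepsilon$. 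Therefore $\overline{D_i}=A_i$.

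Strictly speaking, the hypothesis ``for all $i$'' is more than we need. Strong connectivity together with Step 1 shows that if $D_j\ne\emptyset$ for one $j$, then $D_i\ne\emptyset$ for every $i$: compose along a path from $i$ to $j$. This is what produces the dichotomy in the last sentence of the statement. Either all $D_i$ are empty, or all are nonempty and hence dense. I will record this remark.

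\emph{Step 3 (dimensions).} I will use the standard fact (Falconer, \cite{Fal}, Proposition 3.4) that $\underline{\dim}_B F=\underline{\dim}_B\overline F$ and $\overline{\dim}_B F=\overline{\dim}_B\overline F$ for bounded $F$. The reason is that any finite cover of $F$ by closed sets of diameter at most $\delta$ also covers $\overline F$, so $N_\delta(F)=N_\delta(\overline F)$ if one uses closed covering sets. Applying this with $F=D_i$ and $\overline{D_i}=A_i$ gives the equalities.

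The only delicate point is Step 1, namely checking that components map into components with non-degeneracy preserved. Injectivity makes this immediate. The rest is routine.
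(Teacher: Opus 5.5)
Your proof is correct and follows essentially the same route as the paper: you show $S_{\mathbf e}(D_j)\subseteq D_i$ via injectivity, approximate each $x\in A_i$ by images of non-trivial points inside small cylinders containing $x$, and then use closure-invariance of the box dimensions. The differences are minor improvements rather than a new approach. You use a single level-$N$ cylinder for each $\varepsilon$ instead of the paper's infinite nested address $\bigcap_m S_{e_1}\circ\cdots\circ S_{e_m}(A_{k_m})=\{x\}$. You correctly say that $S_{\mathbf e}(C)$ lies \emph{inside} a non-trivial component, where the paper claims it \emph{is} one. And your strong-connectivity remark makes the stated dichotomy explicit.
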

\begin{proof}
Given that $A_i \backslash \Theta (A_i) \neq \emptyset$ for all $i\in V$. If $x_i \in A_i \backslash \Theta (A_i)$, then there is a unique non-trivial connected componet $\Upsilon(x_i)$ of $A_{i}$ containing $x_{i}.$ Since the map $S_{e}$ is injective and contraction for each $e\in E_{i,j}$, we have $S_e(\Upsilon(x_i))$ is a non-tivial connected component of $A_i$. Thus, $S_e(A_j \backslash \Theta (A_j))\subseteq A_i \backslash \Theta (A_i)$ for all $e\in E_{i,j}.$ Let $x\in A_i.$ First, we will show that there is an infinite sequence $\mathbf{e}=(e_1,e_2,\dots,e_m,\dots)\in E_{i}^*$ such that $$ \lim_{ m \to \infty} S_{e_1} \circ S_{e_2}\circ \cdots  \circ S_{e_m}(A_{k_m})=\bigcap_{m=1}^{\infty} S_{e_1} \circ S_{e_2}\circ \cdots  \circ S_{e_m}(A_{k_m})= \{x\}.$$ Since $x\in A_{i}$ and  $A_i=\bigcup_{k=1}^n \bigcup_{e \in E_{i,k}} S_e (A_k)$, there exists $k_{1}\in V$ and $e_{1}\in E_{i,k_1}$ such that $x\in S_{e_1}(A_{k_1}).$ Thus, there exists $x_1\in A_{k_1}$ such that $x=S_{e_1}(x_1).$ Again, since $x_1\in A_{k_1}$, there exists $k_{2}\in V$ and $e_{2}\in E_{k_1,k_2}$ such that $x_{1}\in S_{e_2}(A_{k_2}).$ Thus, there exists $x_2\in A_{k_2}$ such that $x_{1}=S_{e_2}(x_2).$ This implies that $x=S_{e_1}\circ S_{e_{2}}(x_2)$. Thus, we get $x\in S_{e_1}\circ S_{e_{2}} (A_{k_2})$ and $S_{e_1}\circ S_{e_{2}} (A_{k_2})\subset S_{e_1}(A_{k_1})\subset A_i$. By using same procedure, we get an infinite sequence $\mathbf{e}=(e_1,e_2,\dots,e_m,\dots)\in E_{i}^*$ such that $x\in S_{e_1} \circ S_{e_2}\circ \cdots  \circ S_{e_m}(A_{k_m})$ for all $m\in \mathbb{N}$ and $S_{e_1} \circ S_{e_2}\circ \cdots  \circ S_{e_{m+1}}(A_{k_{m+1}})\subset S_{e_1} \circ S_{e_2}\circ \cdots  \circ S_{e_m}(A_{k_m})\subset A_i$ for all $m\in \mathbb{N}.$ Since $x\in S_{e_1} \circ S_{e_2}\circ \cdots  \circ S_{e_m}(A_{k_m})$ for all $m\in \mathbb{N}$ and $\{S_{e_1} \circ S_{e_2}\circ \cdots  \circ S_{e_m}(A_{k_m})\}_{m=1}^{\infty}$ is a monotonic decreasing sequence of non-empty compact sets with $\text{diam}(S_{e_1} \circ S_{e_2}\circ \cdots  \circ S_{e_m}(A_{k_m}))\to 0$ as $m\to \infty$, by Cantor's intersection theorem we get $$ \lim_{ m \to \infty} S_{e_1} \circ S_{e_2}\circ \cdots  \circ S_{e_m}(A_{k_m})=\bigcap_{m=1}^{\infty} S_{e_1} \circ S_{e_2}\circ \cdots  \circ S_{e_m}(A_{k_m})= \{x\}.$$ Next, we will show that $\lim_{ m \to \infty} S_{e_1} \circ S_{e_2}\circ \cdots  \circ S_{e_m}(a_{k_m})=x,$ where $a_{k_m}$ is an arbitrary element of $A_{k_m}$. Set $r: =\max\{r_e: e\in E\}$. For the given $\epsilon >0$, choose $N\in \mathbb{N}$ such that $r^{N}\text{diam}(\mathbf{A})\leq \epsilon.$ For $m\geq N$, we have 
\begin{align*}
  \|S_{e_1} \circ S_{e_2}\circ \cdots  \circ S_{e_m}(a_{k_m})-x\|&\leq r^{m}\|a_{k_m}-\lim_{n \to \infty} S_{e_{m+1}} \circ S_{e_{m+2}}\circ \cdots  \circ S_{e_{m+n}}(A_{k_{m+n}})\|\\&\leq r^{m} \text{diam}(\mathbf{A})\leq \epsilon. \end{align*}
Thus, we get $\lim_{ m \to \infty} S_{e_1} \circ S_{e_2}\circ \cdots  \circ S_{e_m}(a_{k_m})=x,$ where $a_{k_m}\in A_{k_m}.$  If $A_i \backslash \Theta (A_i) \neq \emptyset$ for all $i\in V$, then we consider a sequence $\{S_{e_1} \circ S_{e_2}\circ \cdots  \circ S_{e_m}(x_{k_m})\}_{m=1}^{\infty},$ where $x_{k_m}\in A_{k_m}\backslash \Theta (A_{k_m}).$ Since $S_e(A_j \backslash \Theta (A_j))\subseteq A_i \backslash \Theta (A_i)$ for all $e\in E_{i,j},$ we have $S_{e_1} \circ S_{e_2}\circ \cdots  \circ S_{e_m}(x_{k_m})\in A_i \backslash \Theta (A_i)$ for all $m\in \mathbb{N}$. Since $\lim_{ m \to \infty} S_{e_1} \circ S_{e_2}\circ \cdots  \circ S_{e_m}(a_{k_m})=x,$ we have $\overline{A_i \backslash \Theta (A_i)}=A_i.$ This implies that $A_i \backslash \Theta (A_i)$ is dense in $A_i$ for all $i\in V.$
It is well known (See \cite{F}) that the upper and lower box dimensions are invariant under closure. Thus, we obtain  $\overline{\dim}_B (A_i \backslash \Theta (A_i))=\overline{\dim}_B (A_i)$ and $\underline{\dim}_B (A_i \backslash \Theta (A_i)=\underline{\dim}_B (A_i).$ This completes the proof.
\end{proof}
Now, we discuss some results of GDIFS associated with dimension-preserving bi-Lipschitz maps. The bi-Lipschitz maps offer a good balance, which leads to categories that are interesting and intriguing both geometrically and algebraically. In fractal geometry, the dimension is a key idea that is used to distinguish between fractal sets. If two sets have different dimensions (such as Hausdorff dimension, box dimension, or others), we usually see them as different. However, even if two compact sets share the same dimension, they can still be quite different in many aspects. Therefore, it is natural to look for another property that can tell us when two fractal sets are truly similar. One such property often considered for this purpose is bi-Lipschitz equivalence.

\begin{theorem}
   Let $(A_1,A_2,\dots,A_n)\in (P_{cp}(\mathbb{R}^d))^n$ be the attractor of the GDIFS
$\mathcal S=(G(V,E),(S_e)_{e\in E})$, where $S_e$ is a bi-Lipschitz map and $\mathcal S$ satisfies the OSC. Suppose that $\dim_H(A_i)>0$ for all $i \in V.$  Then $$\dim_H(A_i \cap U_i)=\dim_H(A_i) \iff A_i \cap U_i \neq \emptyset~ \text{for all}~ i \in V,$$
   where $(U_1,U_2,\dots,U_n)$ is the $n$-tuple of open sets satisfying the open set condition. 
\end{theorem}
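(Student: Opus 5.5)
The forward direction is immediate. If $A_i\cap U_i=\emptyset$, then $\dim_H(A_i\cap U_i)=\dim_H(\emptyset)=0$ (with the convention that the empty set has dimension $0$, or $-\infty$). Since $\dim_H(A_i)>0$ by hypothesis, the equality $\dim_H(A_i\cap U_i)=\dim_H(A_i)$ fails. So the equality forces $A_i\cap U_i\neq\emptyset$.

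For the converse, assume $A_i\cap U_i\ne\emptyset$ for all $i\in V$. The inequality $\dim_H(A_i\cap U_i)\le \dim_H(A_i)$ follows from Lemma \ref{FLemma} a), so only the reverse inequality needs work. Fix $i$ and pick $x\in A_i\cap U_i$. Since $U_i$ is open, there is $\varepsilon>0$ with $B(x,\varepsilon)\subseteq U_i$.

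Now use the cylinder construction from the proof of Theorem \ref{TH2.2}. There is an infinite path $(e_1,e_2,\dots)\in E_i^{*}$ with $x\in S_{e_1}\circ\cdots\circ S_{e_m}(A_{k_m})$ for every $m$, and the diameters of these sets tend to $0$. Choose $m$ large enough that $\mathrm{diam}(S_{\mathbf e}(A_{k_m}))<\varepsilon$, where $\mathbf e=(e_1,\dots,e_m)\in E^m_{i,k_m}$. Then
\[
S_{\mathbf e}(A_{k_m})\subseteq B(x,\varepsilon)\cap A_i\subseteq A_i\cap U_i .
\]
The inclusion into $A_i$ uses the iterated relation $A_i=\bigcup_k\bigcup_{\mathbf e\in E^m_{i,k}}S_{\mathbf e}(A_k)$.

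The map $S_{\mathbf e}$ is a composition of bi-Lipschitz maps and hence bi-Lipschitz. Therefore Lemma \ref{FLemma} b), Lemma \ref{FLemma} a) and Lemma \ref{equaldiem} give
\[
\dim_H(A_i)=\dim_H(A_{k_m})=\dim_H(S_{\mathbf e}(A_{k_m}))\le \dim_H(A_i\cap U_i),
\]
which proves equality. Lemma \ref{equaldiem} is stated for a general GDIFS, but its proof only uses strong connectivity and bi-Lipschitz paths, both of which hold here.

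The one delicate point is reading the statement correctly: the right-hand side should be taken as "$A_i\cap U_i\neq\emptyset$" for the given $i$, applied vertex by vertex. Under that reading the argument above is complete. The OSC is not needed beyond the existence of the open sets $U_i$.

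The step I expect to need care is the inclusion $S_{\mathbf e}(A_{k_m})\subseteq B(x,\varepsilon)$, because it requires the diameters of the cylinders to shrink to zero. This holds because every $S_e$ is a contraction (as in the setup of the GDIFS), so the diameter is at most $r^m\,\mathrm{diam}(\mathbf A)$ with $r<1$. A nonempty intersection with the open set then yields a whole cylinder inside it.
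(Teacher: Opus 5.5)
Your proposal is correct and follows essentially the same argument as the paper: the forward direction comes from $\dim_H(A_i)>0$, and the converse locates a cylinder $S_{\mathbf e}(A_{k})$ that contains a point of $A_i\cap U_i$ and is small enough to lie in a ball inside $U_i$, then combines bi-Lipschitz invariance with Lemma~\ref{equaldiem}. The only cosmetic difference is that the paper picks a level-$N$ cylinder directly from $A_i=\bigcup_k\bigcup_{\mathbf e\in E^N_{i,k}}S_{\mathbf e}(A_k)$ with $r^N\operatorname{diam}(\mathbf A)<\delta/2$, whereas you truncate an infinite coding path.
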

\begin{proof}
    On the one hand, since $\dim_H(A_i)>0$, we infer that $\dim_H(A_i \cap U_i)>0$, so $A_i \cap U_i \neq \emptyset$. On the other hand, let $A_i \cap U_i \neq \emptyset$ and $x_0 \in U_i \cap A_i.$ Since, $U_i$ is an open set, there exists a $\delta>0$ such that $B(x_0, \delta) \subset U_i.$ One can see that $A_i=\bigcup_{k=1}^{n}\bigcup_{\mathbf{e}\in E_{i,k}^m}S_{\mathbf{e}}(A_k)$ for each $m\in \mathbb{N}.$  Set $r:=\max \{r_{e}: e \in E \}$, where $r_e \in (0,1)$ is the upper bi-Lipschitz constant of the map $S_e$. Clearly $r\in (0,1)$. Then, there exists $N>0$ such that $r^N \text{diam} (\mathbf{A})< \frac{\delta}{2}.$ This implies $r^N \text{diam}(A_i) \le r^N \text{diam} (\mathbf{A}) < \frac{\delta}{2}$ for all $i\in V.$ Thus, $\text{diam} (S_{\mathbf{e}}(A_k)) < \frac{\delta}{2}$ for all $\mathbf{e} \in E_{i,k}^N.$ Since $x_0\in A_i$ and  $A_i=\bigcup_{k=1}^{n}\bigcup_{\mathbf{e}\in E_{i,k}^N}S_{\mathbf{e}}(A_k)$, there exist a $k_1$ and $\mathbf{e}\in E_{i,k_1}^{N}$ such that $x_0\in S_{\mathbf{e}}(A_{k_1}).$ Clearly, $S_{\mathbf{e}}(A_{k_1})\subseteq B(x_0, \delta)$ and $S_{\mathbf{e}}(A_{k_1})\subseteq A_i$. Thus, we have
    \begin{equation}\label{EQ2}
        S_{\mathbf{e}}(A_{k_1}) \subset B(x_0, \delta) \cap A_i \subset U_i \cap A_i \subset A_i.
    \end{equation}
Given that the map $S_e$ is a bi-Lipschitz map, thus by using the Lemma \ref{FLemma},  we get $\dim_H(S_{\mathbf{e}}(A_{k_1}))=\dim_H(A_{k_1}).$ From the above expression, we have $\dim_H(A_{k_{1}})=\dim_H(S_{\mathbf{e}}(A_{k_1})) \le \dim_H(U_i \cap A_i) \le \dim_H(A_i).$ The GD-IFS is strongly connected, thus by Lemma \ref{equaldiem},  we obtain $\dim_{H}(A_{k_{1}})=\dim_H(A_i)$. This implies that $\dim_H(A_i)=\dim_H(U_i \cap A_i).$ 
    Hence, the proof is completed.
\end{proof}
  Here, first, we discuss the Lipschitz embedding of two graph-directed attractors under some separation condition, and later, we discuss the bi-Lipschitz equivalence under certain conditions. 
\begin{theorem} \label{THML}Let $(A_1,A_2,\dots,A_n)\in (P_{cp}(\mathbb{R}^d))^n$ be the attractor of the GDIFS
$\mathcal S=(G(V,E),(S_e)_{e\in E})$, where $S_e$ is a bi-Lipschitz map such that
$$l_{e}\|x-y\|\leq \|S_e(x)-S_{e}(y)\|\leq r_{e}\|x-y\|,$$ where $l_e,r_e\in (0,1),~ \forall~e\in E.$ 
Let $(B_1,B_2,\dots,B_n)\in (P_{cp}(\mathbb{R}^d))^n$ be the attractor of another GDIFS~ $\mathcal T=(G(V,E),(f_e)_{e\in E})$, where $f_e$ is a Lipschitz map such that 
$$\|f_e(x)-f_{e}(y)\|\leq l'_{e}\|x-y\|,$$ where $l'_{e}\in (0,1)$ and $l'_{e}\leq l_{e}$ for all $e\in E.$
If both graph-directed IFSs satisfy the SSC, then for each $i\in V$ there exists a Lipschitz bijection from $A_i$ to $B_i$.
\end{theorem}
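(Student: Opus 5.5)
We propose to build the map through the symbolic coding of both attractors. Since the two systems live on the same graph $G(V,E)$, they share the same path spaces $E_i^{*}$. For $\omega=(e_1,e_2,\dots)\in E_i^{*}$, we set
\[
\pi_{\mathcal S}(\omega)=\bigcap_{m\ge1}S_{e_1}\circ\cdots\circ S_{e_m}(A_{k_m}),\qquad
\pi_{\mathcal T}(\omega)=\bigcap_{m\ge1}f_{e_1}\circ\cdots\circ f_{e_m}(B_{k_m}).
\]
The argument in the proof of Theorem \ref{TH2.2} shows that each intersection is a single point and that $\pi_{\mathcal S}:E_i^{*}\to A_i$ and $\pi_{\mathcal T}:E_i^{*}\to B_i$ are surjective. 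Under the SSC, both maps are also injective. Indeed, if two paths first differ at position $m+1$, their images lie in $S_{\mathbf{e}}(S_{e}(A_k))$ and $S_{\mathbf e}(S_{e'}(A_{k'}))$ with $e\neq e'$, where $\mathbf e$ is the common prefix. The inner sets are disjoint by the SSC, and $S_{\mathbf e}$ is injective, so the images are disjoint (and likewise for $\mathcal T$). We then define $\Phi_i=\pi_{\mathcal T}\circ\pi_{\mathcal S}^{-1}:A_i\to B_i$, which is a bijection.

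The main step is the Lipschitz estimate. Put
\[
\delta_{\mathcal S}=\min_{j\in V}\ \min\{\operatorname{dist}(S_e(A_k),S_{e'}(A_{k'})): e\in E_{j,k},\ e'\in E_{j,k'},\ e\neq e'\}>0 .
\]
This minimum is positive because the sets are compact and disjoint by the SSC, and there are finitely many of them. Take $x\neq y$ in $A_i$ with codings whose common prefix is $\mathbf e=(e_1,\dots,e_m)$, ending at vertex $j$. Then $x$ and $y$ lie in $S_{\mathbf e}(S_e(A_k))$ and $S_{\mathbf e}(S_{e'}(A_{k'}))$ respectively, with $e\ne e'$. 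Applying the lower bi-Lipschitz bound $m$ times gives
\[
\|x-y\|\ \ge\ l_{e_1}\cdots l_{e_m}\,\delta_{\mathcal S}.
\]
On the other side, $\Phi_i(x)$ and $\Phi_i(y)$ both lie in $f_{\mathbf e}(B_j)$, so
\[
\|\Phi_i(x)-\Phi_i(y)\|\le l'_{e_1}\cdots l'_{e_m}\operatorname{diam}(\mathbf B),\qquad \mathbf B=\textstyle\bigcup_k B_k .
\]
The hypothesis $l'_e\le l_e$ yields $l'_{e_1}\cdots l'_{e_m}\le l_{e_1}\cdots l_{e_m}$. Combining the two displays gives
\[
\|\Phi_i(x)-\Phi_i(y)\|\le \frac{\operatorname{diam}(\mathbf B)}{\delta_{\mathcal S}}\,\|x-y\|.
\]
The case $m=0$ (the first edges already differ) is covered by the same bound with empty products. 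Note that only the SSC of $\mathcal S$ is used in this estimate, while the SSC of $\mathcal T$ is needed for injectivity of $\pi_{\mathcal T}$, which makes $\Phi_i$ injective.

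The step I expect to be delicate is the bookkeeping around the coding. First, the separation constant must be taken at the vertex $j$ where the paths split, not at $i$, which is why $\delta_{\mathcal S}$ is a minimum over all vertices. Second, the lower bound must be applied to points of $A_j$ before mapping by $S_{\mathbf e}$, which requires $\|S_{\mathbf e}(u)-S_{\mathbf e}(v)\|\ge l_{e_1}\cdots l_{e_m}\|u-v\|$ for all $u,v$; this follows by composing the hypothesis $m$ times.
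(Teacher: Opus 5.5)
Your construction and estimate match the paper's proof: coding by infinite paths, $\Phi_i=\pi_{\mathcal T}\circ\pi_{\mathcal S}^{-1}$, and the lower bound $l_{e_1}\cdots l_{e_m}\delta_{\mathcal S}$ played against the upper bound $l'_{e_1}\cdots l'_{e_m}\operatorname{diam}(\mathbf B)$. The Lipschitz part is fine.

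The gap is the phrase ``and likewise for $\mathcal T$'' in your injectivity argument. For $\mathcal S$ you used that $S_{\mathbf e}$ is injective, which holds because each $S_e$ is bi-Lipschitz. The $f_e$, however, are only assumed to satisfy $\|f_e(x)-f_e(y)\|\le l'_e\|x-y\|$. So $f_{\mathbf e}$ may identify a point of $f_e(B_k)$ with a point of $f_{e'}(B_{k'})$, even though these two sets are disjoint. This cannot be repaired under the stated hypotheses, because the conclusion itself fails. Take one vertex with two loops $a,b$ in $\mathbb R$:
\begin{itemize}
\item $S_a(x)=x/3$ and $S_b(x)=x/3+2/3$, so $A_1$ is the middle-third Cantor set and $l_a=l_b=1/3$;
\item $f_a\equiv 0$ and $f_b(x)=x/3+2/3$, with $l'_a=l'_b=1/3$.
\end{itemize}
Both systems satisfy the SSC. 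Yet $B_1=\{0\}\cup\{1-3^{-k}:k\ge 1\}\cup\{1\}$ is countable, so there is no bijection $A_1\to B_1$ at all. In your notation, $\pi_{\mathcal T}$ sends every path beginning with $a$ to $0$.

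The missing ingredient is an injectivity assumption on $\mathcal T$. Suppose each $f_e$ with $e\in E_{j,k}$ is injective on $B_k$ (for instance, if the $f_e$ are bi-Lipschitz, as in the paper's next theorem). Then $f_{\mathbf e}$ is injective on $B_j$, where $j$ is the terminal vertex of $\mathbf e$. Your disjointness argument then goes through verbatim, and the rest of your proof is complete. The paper's proof depends on the same thing, since it attributes the bijectivity of $\Phi$ to an external lemma about the coding map. So this hypothesis has to be stated, not derived.

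Your separation constant $\delta_{\mathcal S}$, taken only over distinct edges leaving a common vertex, is the right one. The paper's $D_1$ minimizes over all pairs of distinct edges, and the SSC gives no positive lower bound for cylinders issuing from different vertices.
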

\begin{proof} Let $x\in A_i$. Since the graph-directed IFS  $\mathcal S$ satisfies the SSC and by using same idea as in Thoerem \ref{TH2.2}, we get an unique infinite sequence $\mathbf{e}=(e_1,e_2,\dots,e_m,\dots)\in E_{i}^*$ such that $$ \lim_{ m \to \infty} S_{e_1} \circ S_{e_2}\circ \cdots  \circ S_{e_m}(A_{k_m})=\bigcap_{m=1}^{\infty} S_{e_1} \circ S_{e_2}\circ \cdots  \circ S_{e_m}(A_{k_m})= \{x\}.$$ 
Now, we define a bijective map $\Phi: A_i\to B_i $ such that $$\Phi(x)=\bigcap_{m=1}^{\infty} f_{e_1} \circ f_{e_2}\circ \cdots  \circ f_{e_m}(B_{k_m}),$$
where $\mathbf{e}=(e_1,e_2,\dots,e_m,\dots)\in E_{i}^*$ is a unique\cite{GC} infinite sequence such that  $\bigcap_{m=1}^{\infty} S_{e_1} \circ S_{e_2}\circ \cdots  \circ S_{e_m}(A_{k_m})= \{x\}$. Thus, the map $\Phi$ is well defined. The bijectiveness of the map $\Phi$ follows from Lemma 1.3.5 \cite{GC}. Now, we will show that the map $\Phi$ is a Lipschitz map. Let $x$ and $x'$ be two distinct points of $A_i$. Since the IFS $\mathcal S$ satisfies the SSC, there exist $m\in \mathbb{N}$ and $e\in E_{k_m,j}, e'\in E_{k_m,j'}$ such that $e\ne e'$ and $$x\in S_{e_1} \circ S_{e_2}\circ \cdots  \circ S_{e_m}\circ S_{e}(A_{j}),~~x'\in S_{e_1} \circ S_{e_2}\circ \cdots  \circ S_{e_m}\circ S_{e'}(A_{j'}).$$ 
Thus, we get $z\in S_{e}(A_{j})$ and $z'\in S_{e'}(A_{j'})$ such that $z\ne z',$ $x=S_{e_1} \circ S_{e_2}\circ \cdots  \circ S_{e_m}(z)$ and $ x'=S_{e_1} \circ S_{e_2}\circ \cdots  \circ S_{e_m}(z').$ Therefore, we have 
$$l_{e_1}l_{e_2}\cdots l_{e_m}\|z-z'\|\leq \|x-x'\|\leq r_{e_1}r_{e_2}\cdots r_{e_m}\|z-z'\|.$$ As  the IFS $\mathcal S$ satisfies SSC, we have $$D_1:=\min\limits_{\substack{e\in E_{i,j},e'\in E_{k,l},\\ e\ne e', \forall i,j,k,l\in V}}\{ d(S_{e}(A_j), S_{e'}(A_l))\}>0.$$ Therefore from the above, we obtain
$$l_{e_1}l_{e_2}\cdots l_{e_m} D_1\leq \|x-x'\|\leq r_{e_1}r_{e_2}\cdots r_{e_m} \max_{i\in V}\{\text{diam}(A_i)\}.$$
By similar idea, we get
$$\|\Phi(x)-\Phi(x')\|\leq l'_{e_1}l'_{e_2}\cdots l'_{e_m} \max_{i\in V}\{\text{diam}(B_i)\}.$$ Since $l'_{e}\leq l_{e}$ for all $e\in E$ and using previous two inequalites, we get
$$ \|\Phi(x)-\Phi(x')\|\leq D_{1}^{-1} \max_{i\in V}\{\text{diam}(B_i)\}\|x-x'\|.$$
This implies the map $\Phi$ is a Lipschitz map.
This completes the proof.
\end{proof}
In the following theorem, we discuss the bi-Lipschitz euvalence of two different graph-directed attractors on the same graph under BDP. 
\begin{theorem}
Let $(A_1,A_2,\ldots,A_n)\in (P_{cp}(\mathbb{R}^d))^n$ be the attractor of the GDIFS
\[
\mathcal S=(G(V,E),(S_e)_{e\in E}),
\]
where $S_e$ is a bi-Lipschitz map for each $e\in E$. Let
$(B_1,B_2,\ldots,B_n)\in (P_{cp}(\mathbb{R}^d))^n$ be the attractor of another GDIFS
\[
\mathcal T=(G(V,E),(f_e)_{e\in E}),
\]
where $f_e$ is a bi-Lipschitz map for each $e\in E$. Suppose that both graph-directed IFSs satisfy the SSC and that the GDIFS $\mathcal S$ satisfies the BDP.

Assume further that, for every finite admissible path $w$, the corresponding composed maps $S_w$ and $f_w$ satisfy
\[
\operatorname{Lip}^{-}(S_w)=\operatorname{Lip}^{-}(f_w)
\]
and
\[
\operatorname{Lip}^{+}(S_w)=\operatorname{Lip}^{+}(f_w).
\]
Then $A_i$ and $B_i$ are bi-Lipschitz equivalent for each $i\in V$.
\end{theorem}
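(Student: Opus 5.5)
The plan is to reuse the coding map from the proof of Theorem \ref{THML}. Fix $i\in V$. Since $\mathcal S$ satisfies the SSC, every $x\in A_i$ has a unique infinite admissible path $\mathbf e=(e_1,e_2,\dots)\in E_i^*$ with $\bigcap_m S_{e_1\cdots e_m}(A_{k_m})=\{x\}$. Since $\mathcal T$ also satisfies the SSC, the analogous coding of $B_i$ is a bijection as well. We define
\[
\Phi(x)=\bigcap_{m=1}^{\infty} f_{e_1}\circ\cdots\circ f_{e_m}(B_{k_m}).
\]
As in Theorem \ref{THML} (via Lemma 1.3.5 of \cite{GC}), $\Phi:A_i\to B_i$ is a well-defined bijection. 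Its inverse is the same construction with the roles of $\mathcal S$ and $\mathcal T$ exchanged. So the task reduces to a two-sided estimate on $\|\Phi(x)-\Phi(x')\|$ in terms of $\|x-x'\|$.

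Fix distinct $x,x'\in A_i$. Let $w=(e_1,\dots,e_m)$ be their longest common prefix. This path is finite because the codings differ, and it ends at some vertex $k$. Then $x=S_w(z)$ and $x'=S_w(z')$ with $z\in S_e(A_j)$ and $z'\in S_{e'}(A_{j'})$, where $e\neq e'$ both start at $k$. Similarly $\Phi(x)=f_w(u)$ and $\Phi(x')=f_w(u')$ with $u\in f_e(B_j)$ and $u'\in f_{e'}(B_{j'})$.

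Let $D_1>0$ be the minimal distance between first-level cylinders of $\mathcal S$ sharing a starting vertex, as in Theorem \ref{THML}, and let $D_2>0$ be the corresponding constant for $\mathcal T$; both are positive by the SSC. Set $M_A=\max_k \operatorname{diam}A_k$ and $M_B=\max_k\operatorname{diam}B_k$. We obtain
\[
\operatorname{Lip}^-(S_w)D_1\le\|x-x'\|\le \operatorname{Lip}^+(S_w)M_A,
\]
\[
\operatorname{Lip}^-(f_w)D_2\le\|\Phi(x)-\Phi(x')\|\le \operatorname{Lip}^+(f_w)M_B .
\]

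We now invoke the hypotheses $\operatorname{Lip}^\pm(f_w)=\operatorname{Lip}^\pm(S_w)$ together with the BDP of $\mathcal S$, namely $\operatorname{Lip}^+(S_w)\le K\operatorname{Lip}^-(S_w)$. This gives
\[
\|\Phi(x)-\Phi(x')\|\le \operatorname{Lip}^+(S_w)M_B\le K\operatorname{Lip}^-(S_w)M_B\le \frac{KM_B}{D_1}\|x-x'\|,
\]
\[
\|\Phi(x)-\Phi(x')\|\ge \operatorname{Lip}^-(S_w)D_2\ge \frac{D_2}{K}\operatorname{Lip}^+(S_w)\ge \frac{D_2}{KM_A}\|x-x'\|.
\]
Hence $\Phi$ is bi-Lipschitz with constants independent of $x,x'$, and $A_i$ and $B_i$ are bi-Lipschitz equivalent.

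The main obstacle is applying the BDP to arbitrary finite admissible paths $w$ rather than single edges. The definition in the paper quantifies over $e\in E_{i,j}$, where $E_{i,j}$ was defined as the set of all finite paths from $i$ to $j$, so I would read it as applying to all paths. One must also handle the edge case $m=0$, where $w$ is empty and $S_w$ is the identity. This case is covered by the same estimates with $\operatorname{Lip}^\pm=1$.
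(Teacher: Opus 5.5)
Your proposal is correct and follows the paper's proof: you use the same coding map $\Phi$, the same two-sided cylinder estimates in terms of $\operatorname{Lip}^{\pm}(S_w)$, $\operatorname{Lip}^{\pm}(f_w)$, $D_1$, $D_2$ and the maximal diameters, and the same combination of the equal Lipschitz constants with the BDP over all finite paths, which yields the paper's constants $K^{-1}D_2M_A^{-1}$ and $KD_1^{-1}M_B$. Two of your choices are small improvements on the paper's write-up: you restrict $D_1,D_2$ to distinct edges with a common initial vertex (the paper's minimum over all pairs of distinct edges could be $0$), and you handle the empty common prefix explicitly.
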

\begin{proof}
First, we define the same bijective map $\Phi$ as in the previous result. The map $\Phi: A_i\to B_i $ is as follows $$\Phi(x)=\bigcap_{m=1}^{\infty} f_{e_1} \circ f_{e_2}\circ \cdots  \circ f_{e_m}(B_{k_m}),$$
where $\mathbf{e}=(e_1,e_2,\dots,e_m,\dots)\in E_{i}^*$ is a unique infinite sequence such that  $\bigcap_{m=1}^{\infty} S_{e_1} \circ S_{e_2}\circ \cdots  \circ S_{e_m}(A_{k_m})= x$. This map is well defined. After that, using the same line proof as in the previous theorem, we obtain
$$\operatorname{Lip}^{-}(S_w) D_1\leq \|x-x'\|\leq \operatorname{Lip}^{+}(S_w) \max_{i\in V}\{\text{diam}(A_i)\},$$ where $w=e_1e_2\cdots e_m.$
As the IFS $\mathcal{T}$ satisfies SSC, we have $$D_2:=\min\limits_{\substack{e\in E_{i,j},e'\in E_{k,l},\\ e\ne e', \forall i,j,k,l\in V}}\{d(f_{e}(B_j), f_{e'}(B_l))\}>0.$$
By similar idea, we get
$$\operatorname{Lip}^{-}(f_w) D_2\leq \|\Phi(x)-\Phi(x')\|\leq \operatorname{Lip}^{+}(f_w) \max_{i\in V}\{\text{diam}(B_i)\}.$$
Since
\[
\operatorname{Lip}^{-}(S_w)=\operatorname{Lip}^{-}(f_w)
\]
and
\[
\operatorname{Lip}^{+}(S_w)=\operatorname{Lip}^{+}(f_w),
\]
and $\mathcal{S}$ satisfies BDP, we get 

$$\operatorname{Lip}^{+}(S_w) \le K \operatorname{Lip}^{-}(S_w),$$ which implies
$$\operatorname{Lip}^{+}(f_w) \le K \operatorname{Lip}^{-}(f_w).$$
Hence, we have
$$K^{-1} D_{2} ( \max_{i\in V}\{\text{diam}(A_i)\})^{-1}\|x-x'\|\leq \|\Phi(x)-\Phi(x')\|\leq K D_1^{-1} \max_{i\in V}\{\text{diam}(B_i)\}\|x-x'\|,$$ where $K>0$ is the BDP constant. Thus, $\Phi$ is bi-Lipschitz. This completes the proof.
\end{proof}

\begin{remark}
We note that the bounded distortion property (BDP) is a genuine additional assumption in the bi-Lipschitz setting and is not automatically satisfied by a family of bi-Lipschitz contractions.

Indeed, consider the map
\[
S:\mathbb{R}^{2}\to\mathbb{R}^{2},
\qquad
S(x,y)=\left(\frac{x}{2},\frac{y}{3}\right).
\]
Then \(S\) is a contractive bi-Lipschitz map satisfying
\[
\frac{1}{3}\|(x,y)-(x',y')\|
\le
\|S(x,y)-S(x',y')\|
\le
\frac{1}{2}\|(x,y)-(x',y')\|.
\]
However, for the \(n\)-th iterate,
\[
S^{n}(x,y)
=
\left(
\frac{x}{2^{n}},
\frac{y}{3^{n}}
\right),
\]
we have
\[
\operatorname{Lip}^{+}(S^{n})=\frac{1}{2^{n}},
\qquad
\operatorname{Lip}^{-}(S^{n})=\frac{1}{3^{n}}.
\]
Consequently,
\[
\frac{\operatorname{Lip}^{+}(S^{n})}
{\operatorname{Lip}^{-}(S^{n})}
=
\left(\frac{3}{2}\right)^{n}
\to\infty
\qquad
\text{as } n\to\infty.
\]
Hence, no uniform distortion constant exists and the bounded distortion property fails. Thus, bi-Lipschitzity alone does not imply the bounded distortion property.

On the other hand, Examples 6.5 and 6.6 in \cite{NG} provide classes of contractive bi-Lipschitz systems satisfying the BDP. Therefore, the bounded distortion property is a natural but nontrivial assumption in the bi-Lipschitz setting.
\end{remark}
\begin{theorem}
 Let $(A_1,A_2,\dots,A_n)\in (P_{cp}(\mathbb{R}^d))^n$ be the attractor of the GDIFS
$\mathcal S=(G(V,E),(S_e)_{e\in E})$, where $S_e$ is a bi-Lipschitz map such that
$$l_{e}\|x-y\|\leq \|S_e(x)-S_{e}(y)\|\leq r_{e}\|x-y\|,$$ where $l_e,r_e\in (0,1)~\text{for all}~e\in E.$ 
    Let $\mathcal{S}$ satisfies the SSC and bounded distortion property (BDP) , then $A_i$ and $A_j$ are bi-Lipschitz equivalent for each $i,j\in V.$
\end{theorem}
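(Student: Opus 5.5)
The plan is to use strong connectivity to embed one attractor in the other by cylinder maps. Then I would check that, under the SSC, each $A_i$ is bi-Lipschitz equivalent to a finite disjoint union of scaled copies of the sets $A_k$. Fix $i,j\in V$. Since $\mathcal S$ is strongly connected, there is a finite admissible path $\mathbf{u}\in E_{i,j}$, and hence $S_{\mathbf u}(A_j)\subseteq A_i$. The map $S_{\mathbf u}$ is a composition of bi-Lipschitz maps, so $S_{\mathbf u}:A_j\to S_{\mathbf u}(A_j)$ is bi-Lipschitz with constants $\operatorname{Lip}^{-}(S_{\mathbf u})>0$ and $\operatorname{Lip}^{+}(S_{\mathbf u})<\infty$. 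It therefore suffices to show that $A_i$ is bi-Lipschitz equivalent to the cylinder $S_{\mathbf u}(A_j)$, i.e.\ that $A_i$ is bi-Lipschitz equivalent to a proper piece of itself.

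To do this I would build the bijection symbolically, as in the proof of Theorem \ref{THML}. Using the SSC, every $x\in A_i$ has a unique coding $\mathbf e=(e_1,e_2,\dots)\in E_i^*$. The key tool is the distance estimate derived there: if $x,x'$ have longest common prefix $w=e_1\cdots e_m$, then
\[
\operatorname{Lip}^{-}(S_w)D_1\le \|x-x'\|\le \operatorname{Lip}^{+}(S_w)\max_{k}\operatorname{diam}(A_k),
\]
and by the BDP the two sides differ only by the factor $K\max_k\operatorname{diam}(A_k)/D_1$. Consequently any bijection between codings that preserves the tree structure (common prefixes go to common prefixes) and changes the size $\operatorname{Lip}^{+}(S_w)$ of the common-prefix cylinder by at most a bounded factor is automatically bi-Lipschitz.

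The candidate bijection $\Psi:A_i\to S_{\mathbf u}(A_j)$ should be obtained by a matching of the cylinder trees, in the spirit of the ``matchable'' condition of \cite{Rao}. At each node I would decompose $A_i$ and $S_{\mathbf u}(A_j)$ into finitely many cylinders and pair them, recursively, with pairs of similar type and comparable size. The first thing I would try is a fixed-length substitution: expand $A_i$ to level $p$ (with $p=|\mathbf u|$, say) and expand $S_{\mathbf u}(A_j)$ so that the resulting cylinders end at the same vertices and have comparable contraction. Strong connectivity and the bounded ratios $l_e,r_e$ control the size discrepancy by a constant, and the BDP keeps each cylinder comparable to a ball. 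Given such a matching, the same chain of inequalities as in Theorem \ref{THML} yields bi-Lipschitz bounds with constant roughly $K^{2}D_1^{-1}\max_k\operatorname{diam}(A_k)\operatorname{Lip}^{+}(S_{\mathbf u})/\operatorname{Lip}^{-}(S_{\mathbf u})$.

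The main obstacle is exactly this matching step. The number of children of each vertex and their contraction ratios are generally different for $A_i$ and for $S_{\mathbf u}(A_j)$, so a level-by-level bijection need not exist. Falconer--Marsh type algebraic obstructions \cite{FM} suggest the statement may even fail without extra hypotheses. Accordingly I would first check whether the matching can be carried out using only strong connectivity, by absorbing mismatches into bounded-depth regroupings. If it cannot, I would isolate the additional combinatorial condition the argument actually requires.
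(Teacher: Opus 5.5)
Your proposal does not yet prove the theorem. The step you flag as ``the main obstacle'' is the entire content of the statement. Since $S_{\mathbf u}:A_j\to S_{\mathbf u}(A_j)$ is bi-Lipschitz, showing $A_i\simeq S_{\mathbf u}(A_j)$ (writing $\simeq$ for bi-Lipschitz equivalence) is exactly the original problem $A_i\simeq A_j$. That $S_{\mathbf u}(A_j)$ happens to be a proper subset of $A_i$ does not reduce anything. Your proposed mechanism, a prefix-preserving level-by-level matching of the two coding trees, generally does not exist, for the reason you give yourself, and you leave open whether extra hypotheses are needed. None are. The Falconer--Marsh obstructions concern two systems with unrelated ratios. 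Here $A_i$ and $A_j$ are both finite disjoint unions of cylinder copies of the same sets $A_1,\dots,A_n$.

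The missing idea is an absorption (``Hilbert hotel'') lemma that avoids tree matching altogether: each $A_k$ is bi-Lipschitz equivalent to any finite disjoint union of bi-Lipschitz copies of itself.

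\textbf{Setup.} If every vertex has out-degree one, all $A_k$ are points and there is nothing to prove. Otherwise strong connectivity gives two cycles $c_0,c_1$ at $k$, neither a prefix of the other. By the SSC, $A_k=L_0(A_k)\sqcup L_1(A_k)\sqcup C$ with $L_\nu=S_{c_\nu}$ and $C$ compact.

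\textbf{Spine decomposition.} Iterating along the spine of $L_1$, with fixed point $t$, gives
\[
A_k=\bigsqcup_{m\ge 0}L_1^{m}L_0(A_k)\sqcup\bigsqcup_{m\ge 0}L_1^{m}(C)\sqcup\{t\}
\]
and
\[
L_1(A_k)\sqcup C=\bigsqcup_{m\ge 1}L_1^{m}L_0(A_k)\sqcup\bigsqcup_{m\ge 0}L_1^{m}(C)\sqcup\{t\}.
\]

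\textbf{The absorbing map.} Define a bijection $L_1(A_k)\sqcup C\to A_k$ by $L_1^{-1}$ on the pieces $L_1^{m}L_0(A_k)$ with $m\ge 1$, and the identity on everything else. This map is not prefix-preserving. However, it changes the length of the common $L_1$-prefix of two points by at most one. Hence the BDP bound $\operatorname{Lip}^{+}(L_1^{q})\le K\operatorname{Lip}^{-}(L_1^{q})$ and the separation constant of $L_0(A_k),L_1(A_k),C$ give bi-Lipschitz bounds on the cross terms. It follows that $A_k\simeq g_0(A_k)\sqcup g_1(A_k)$ for any bi-Lipschitz $g_0,g_1$ with disjoint images, and by induction the same holds for any finite such family.

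\textbf{Conclusion.} Take a finite cut of paths from $i$ in which every vertex occurs as an endpoint; strong connectivity and branching allow this. By the SSC, $A_i=\bigsqcup_{k\in V}\bigsqcup_{L\in\Gamma_k}L(A_k)$ with every $\Gamma_k\neq\emptyset$. Absorbing each group yields $A_i\simeq\bigsqcup_{k}T_k(A_k)$ for a fixed family of disjoint copies $T_k(A_k)$ that does not depend on $i$. Gluing finitely many bi-Lipschitz maps between positively separated compact pieces is again bi-Lipschitz, so $A_i\simeq A_j$. This is the route the paper takes.
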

\begin{proof}
Suppose that $ A \in \{A_i\}_{i \in V}$. Then there exist bi-Lipschitz maps $L_0, L_1$ and a compact set $C$, such that
\begin{equation} \label{Eq1}
    A=L_0(A) \cup L_1(A) \cup C,
\end{equation}
 where the union is disjoint. Since the underlying directed graph is strongly connected, for every vertex \(j\in V\), there exists an admissible finite path from the vertex corresponding to \(A\) to \(j\). Hence, finite compositions of the corresponding bi-Lipschitz contractions generate cylinder sets associated with admissible finite paths. Moreover, under the strong separation condition, these cylinder sets are pairwise disjoint whenever they correspond to distinct admissible paths. Therefore, there exist non-empty families of bi-Lipschitz maps \(\{\Gamma_j\}_{j\in V}\) such that
 \begin{equation}\label{Eq2}
     A=\bigcup_{j \in V} \bigcup_{L \in \Gamma_j } L (A_j),
 \end{equation}
where the union is disjoint.\\
Now, let  $\{S_i\}_{i=0}^m$ be the collection of bi-Lipschitz maps such that $\{S_i(A)\}_{i=0}^m$ are pairwise disjoint. Then we show that $A$ and $ \cup_{i=0}^k S_i(A)$ are bi-Lipschitz  equivalent.
We will prove this by induction. It suffices to show the conclusion for $m=1,$ that is $A$ is bi-Lipschitz equivalen to $S_0(A) \cup S_1(A).$ For a finite word $i_1...i_m \in \{0,1\}^m$, set $L_{i_1...i_m}=L_{i_1} \circ \cdots \circ L_{i_m}.$ For the empty word $w$, $L_w$ equals the identity mappings. Also, $1^m$ is an abbreviation of $1...1 (m~ \text{ones})$. From Equation \ref{Eq1}, we have $$A=(L_0(A) \cup C) \cup L_1(A).$$ Now, apply $A$ on the map $L_1$  and continue the process, we get
$$A=\cup_{m=0}^\infty \left((L_{1^m0}(A) \cup L_{1^m} (C) \right) \cup \{t\},$$ where $t$ is the fixed point of the bi-Lipschitz map $L_1.$ In this way, we can write
$$A=L_0(A) \cup_{m=0}^\infty L_{1^{m+1}0}(A) \cup \left( \cup_{m=0}^\infty L_{1^m}(C) \cup \{t\}\right)=:L_0 (A) \cup A' \cup C',$$ where $A' \cup C'=L_1 (A) \cup C,$ and
$$S_0(A)\cup S_1 (A)=S_0(A) \cup_{m=0}^\infty S_1L_{1^{m}0}(A) \cup \left( \cup_{m=0}^\infty S_1L_{1^m}(C) \cup \{S_1t\}\right)=:S_0 (A) \cup A''\cup C'',$$ where $A'' \cup C''=S_1(A).$\\
Now, we define a bijection map $\psi: A \to S_0(A) \cup S_1(A)$ by 
\begin{equation*}
\psi(x)=\begin{cases}
S_0L_0^{-1}(x)  \quad \text{if}~x \in L_0(A),\\
 S_1L_1^{-1}(x)  \quad \text{if}~x \in A'=\cup_{m=0}^\infty L_{1^{m+1}0}A,\\
S_1(x)  \quad \text{if}~x \in C'=\cup_{m=0}^\infty L_{1^m}(C) \cup \{t\}.\\
\end{cases}
\end{equation*}
Now, we will show that the map $\psi$ is bi-Lipschitz. Since $L_1$ and $S_1$ are bi-Lipschitz maps such that  $$l_1\|x-y\|\leq \|L_1(x)-L_1(y)\|\leq r_1\|x-y\|$$ and $$l'_1\|x-y\|\leq \|S_1(x)-S_1(y)\|\leq r'_1\|x-y\|,$$ where $l_1, r_1,l'_1,r_1' \in (0,1).$
The map $L_0$ is also bi-Lipschitz so we have $$l_0\|x-y\|\leq \|L_0(x)-L_0(y)\|\leq r_0\|x-y\|,$$ where $l_0, r_0 \in (0,1).$
Since, $d(L_0 (A), A'\cup C')>0$ and $d(S_0 (A), A'' \cup C'')>0,$ $d$ is Huasdorff distance, so here we only need the restriction of $\psi$ to $A' \cup C'$ (the corresponding image is $A'' \cup C''$). Put $D=:\min \{d(L_0(A), L_1(A)), d(L_0(A), C), d(L_1(A), C)\}>0.$ For $x \in A'$ and $x' \in C'$, let $x \in L_{1^{p+1}0} (A)=L_{1^{p+1}}(L_0 (A))$ with $p \ge 0$ and let $x' \in L_{1^m} (C)$
 with $m \ge 0$ or $m=\infty$ and $L_{1^\infty} (C)=\{t\}.$  Then $\psi (x) \in S_1 L_{1^{p}0} (A)$ and $\psi (x') \in S_1 L_{1^m} (C).$
 Therefore, we have 
 $$ l_1^{\min \{p+1,m\}} D\le \|x-x'\| \le r_1^{\min \{p+1,m\}} \text{diam} (A).$$
 By similar idea, we get $$l_1'l_1^{\min \{p,m\}+1} D\le \|\psi(x)-\psi(x')|\  \le r_1' r_1^{\min \{p,m\}+1} \text{diam} (A).$$
 Consequently, for any $x \in A'$ and $x' \in C'$, by using the above two inequlaities and BDP 
 $$  (K')^{-1}l_1' D (\text{diam}(A))^{-1} \|x-x'\| \le\|\psi(x)-\psi(x')\| \le r_1' K' D^{-1} \text{diam}(A) \|x-x'\|,$$ where $K'>0$ is the BDP constant. Thus $\psi$ is bi-Lipschitz. This completes our claim.
 Now, let $\{T_j\}_{ j \in V}$ be collection of bi-Lipschitz map such that $\{T_j (A_j)\}_{j \in V}$ are pairwise disjoint. So from above, we say that $ \bigcup_{L \in \Gamma_j } L (A_j)$ and $T_j (A_j)$ are bi-Lipschitz equivalent. Then for any $A \in \{A_i\}_{i \in V}$ and using Eqaution \ref{Eq2}, we have that $A=\bigcup_{j \in V} \bigcup_{L \in \Gamma_j } L (A_j)$  and $\cup_{j \in V}T_j (A_j)$ are bi-Lipschitz equivalent. That is $A_i$ and $A_j$ are bi-Lipschitz equivalent for each $i,j \in V.$ This completes the proof.
 \end{proof}

	\bibliographystyle{amsplain}


 \end{document}